%% file: CSF-1-WL_v3.tex
\documentclass[11pt,reqno]{amsart}
\usepackage{amsmath,amssymb,amsthm,mathtools}
\usepackage{booktabs,longtable}
\usepackage{graphicx}
\newtheorem{theorem}{Theorem}[section]
\newtheorem{lemma}[theorem]{Lemma}
\newtheorem{proposition}[theorem]{Proposition}
\newtheorem{corollary}[theorem]{Corollary}
\theoremstyle{definition}

\theoremstyle{remark}

\newcommand{\wl}{\mathrm{1WL}}
\newcommand{\de}{d_E^{\cong}}
\newcommand{\Dcr}{D_{\mathrm{CR}}^{\mathrm{conn}}}
\newcommand{\core}{\operatorname{core}}

\title{Sharp Bounds for Common Spanning Forests under Color Refinement}
\author{Jianshen Zhu}
\address{Department of Information Sciences and Technology, Tokyo University of Science, Noda, Chiba 278-8510, Japan}
\email{zhu\_js@rs.tus.ac.jp}
\date{\today}
\keywords{color refinement, Weisfeiler--Leman algorithm, common spanning forest,
cyclomatic number, fractional isomorphism, graph edit distance}

\begin{document}
\begin{abstract}
We study the common spanning structure of connected finite
simple graphs that are indistinguishable by uniformly
initialized color refinement. We prove that any two such
graphs with common cyclomatic number $r\ge2$ contain
isomorphic spanning forests with at most $r-1$ components.
This bound depends only on $r$ and is sharp for every
$r\ge2$, even when both graphs have maximum degree
at most three. We also determine the maximum, over all such graph
pairs without restricting their order, of the minimum edge
symmetric difference under vertex bijections: it is exactly
$4(r-1)$.
\end{abstract}
\maketitle
\pagestyle{plain}
\raggedbottom

\input{sections/01_introduction}

\input{sections/02_preliminaries}
\input{sections/03_common_forests}

\input{sections/04_sharpness}

\bibliographystyle{plain}
\bibliography{CSF-1-WL}

\appendix
\input{appendices/A_subcubic_tree_induction}
\end{document}

%% file: sections/01_introduction.tex
\section{Introduction}\label{sec:introduction}

Color refinement is a standard tool in graph isomorphism testing:
differences in color multiplicities certify that two graphs are
nonisomorphic. However, color refinement does not distinguish every
pair of nonisomorphic graphs. This limitation also applies to
standard message-passing graph neural networks with identical
initial node features, whose distinguishing power is bounded by
color refinement~\cite{Morris2019,Xu2019}.
We investigate the spanning structures that are nevertheless
guaranteed to be shared by graphs that color refinement cannot
distinguish. For connected graphs of fixed cyclomatic number, we
establish a sharp bound on the number of components in a common
spanning forest that depends only on the cyclomatic number, not on
the order of the graphs.

Unless explicitly stated otherwise, the graphs in our main statements
are finite, undirected, simple, and uncolored. Multigraphs arise only
as auxiliary objects in the proofs. We use uniformly initialized
color refinement, namely the one-dimensional Weisfeiler--Leman
algorithm (1-WL)~\cite{Arvind2017}. Starting with the same color at
every vertex, each round replaces a vertex's color by the pair
consisting of its previous color and the multiset of its neighbors'
colors. We write $G\equiv_{\wl}H$ if, when refinement is run on the
disjoint union $G\sqcup H$, each color occurs equally often in $G$
and $H$ at every round. This notion agrees with fractional
isomorphism~\cite{Tinhofer1986,Ramana1994,Arvind2017}.
In particular, $G\equiv_{\wl}H$ implies that $G$ and $H$ have the
same order and degree multiset, and hence the same number of edges.
For a connected graph $G$, its \emph{cyclomatic number} is
\[
r(G):=|E(G)|-|V(G)|+1.
\]
Thus connected color-refinement-equivalent graphs have the same
cyclomatic number.

We say that $G$ and $H$ have a \emph{common spanning forest}
if they contain isomorphic spanning forests.
Our main result gives a sharp upper bound on the number of
components of such forests.

\begin{theorem}\label{thm:main}
Let $G$ and $H$ be connected finite simple uncolored graphs with
$G\equiv_{\wl}H$ and common cyclomatic number $r\ge2$. Then $G$ and $H$
have a common spanning forest with at most $r-1$ components.
For every integer $r\ge2$, there is such a pair
of graphs of maximum degree at most three for which
every common spanning forest has at least $r-1$
components.
\end{theorem}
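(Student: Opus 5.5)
Both parts rest on the structure of the 1-WL stable partition. Run refinement on $G\sqcup H$ until it stabilizes, giving color classes $C_1,\dots,C_k$ with \emph{parameters}: for each pair $(i,j)$, every vertex of color $C_i$ in either graph has the same number $d_{ij}$ of neighbors of color $C_j$, and $|C_i\cap V(G)|=|C_i\cap V(H)|$.

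\textbf{Upper bound.} The plan is to build a common spanning forest from a tree-like skeleton and then add back as many cycle edges as possible. The key observation is that the universal cover (unfolding tree) depends only on the parameters, so $G$ and $H$ have the same universal cover and the same quotient multigraph $Q$ on the colors, with weights $d_{ij}$.

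Fix a spanning tree $T_Q$ of the quotient. For each tree edge $\{i,j\}$ of $Q$, the bipartite graph between $C_i$ and $C_j$ is $(d_{ij},d_{ji})$-biregular in both $G$ and $H$. The idea is to select, in each graph, a ``canonical'' subgraph determined by the parameters alone, for example a subgraph in which each vertex of $C_j$ keeps exactly one parent in $C_i$. Such a choice exists by Hall's theorem applied to a biregular graph. Gluing these choices along $T_Q$ gives spanning forests $F_G\subseteq G$ and $F_H\subseteq H$. Each component of these forests is a tree whose shape is dictated by the parameters, so the two forests are isomorphic component by component as long as component sizes can be matched.

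The remaining step is a counting argument bounding the number of components by the cyclomatic number. Using $|E|-|V|+1=r$, I would show the following: each forest component beyond the first forces at least one independent cycle that is not absorbed. Cycles arise from non-tree edges of $Q$, from loops, and from $d_{ij}\ge2$ on both sides. Then I would argue that we can always merge components in both graphs simultaneously. Merging means adding one edge per graph between two components of the same isomorphism type in a color-consistent way, and we repeat this until at most $r-1$ components remain.

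The point where merging can fail is exactly when the number of components equals $r-1$. The reason is that one spare cycle is lost globally: a connected common tree would give $r$ cycles' worth of extra edges, and the whole excess $r$ is then needed to absorb the cycles. I expect this merging and counting step to be the main obstacle. The danger is that greedy merging in $G$ and $H$ may produce non-isomorphic shapes. My first attempt would be induction on $r$: delete a color-consistent edge orbit (a matching class) that preserves 1-WL equivalence and lowers $r$. I would try this first for $r=2$, where the graphs are unicyclic-plus-one, directly.

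\textbf{Sharpness.} For each $r\ge2$ I would construct subcubic pairs. Start from a graph $G$ consisting of $r-1$ long cycles of distinct lengths joined in a chain by paths. Let $H$ be obtained by swapping a pair of edges between equal-colored positions (a 2-switch), which preserves the 1-WL parameters. The lengths should be chosen, for example as distinct primes that are large compared with $r$, so that the cycles of $H$ are the ``crossed'' versions.

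To show that every common spanning forest has at least $r-1$ components, suppose a common forest $F$ with $c$ components exists. Then $|E(G)\setminus E(F)|=r-1+c$. I would compare the degree-3 vertex structure of $F$: the branch vertices and the path lengths between them are invariants of $F$. Forcing these to agree in both graphs requires cutting each long cycle at incompatible places, and each such incompatibility costs one component. The subcubic constraint makes these invariants rigid, and this is where the appendix's subcubic tree induction presumably enters.
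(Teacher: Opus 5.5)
\textbf{There is a genuine gap in both halves: the existence argument is missing its two key ideas, and the sharpness construction fails as stated.}

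\textbf{Existence.} Your argument breaks down at the two steps that carry the weight.

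First, independent one-parent selections in $G$ and $H$ need not be isomorphic. If $d_{ji}\ge2$, a vertex of $C_i$ can receive anywhere from $0$ to $d_{ij}$ children, so component shapes are \emph{not} dictated by the parameters. Applying Hall's theorem in each graph separately does not help; what you need is a coupling of the two choices. The paper fixes $f_i$ first. It then takes $f_j$ from a perfect matching of a regular bipartite multigraph whose edges are the triples $(y,e,e')$ with $e$ joining $x$ to $y$ and $e'$ joining $x'$ to $f_i(y)$. This pairs the chosen parent edges by construction.

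Second, and decisively, such a forest has exactly as many components as the root class has vertices, and this can far exceed $r-1$. Any two connected cubic graphs of order $n$ are $\equiv_{\wl}$ with a single color class and $r-1=n/2$, yet your forest is edgeless. Subdividing every edge of a cubic graph once gives two classes, of sizes $b$ and $3b/2$, against $r-1=b/2$. Your merging step is only asserted. The proposed induction also fails in exactly these cases: deleting a whole color-consistent edge class does preserve $\equiv_{\wl}$, but here there is only one edge class, and deleting it destroys connectivity.

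The paper instead passes to the kernels, where $r=q-b+1$ and $2q\ge3b$. It lifts a color-matched kernel forest with $a$ edges to a common spanning forest with $b-a$ components, using color sequences along suppressed paths, a Hall matching of unselected kernel edges (each costing one deleted edge), and the attached trees. So one needs $a\ge\max\{0,2b-q\}$. The parent construction delivers this only when there are at least two branch colors: then $a=b-s_*\ge b/2\ge2b-q$, where $s_*\le b/2$ is the smallest branch class. With one branch color the kernel is $d$-regular:
\begin{itemize}
\item $d\ge4$ is trivial;
\item $d=3$ with some palindromic sequence occurring once per vertex yields a common perfect matching;
\item the uniform cubic case requires a \emph{single} forest with $b/2$ edges that is a spanning subgraph of every connected cubic multigraph on $b$ vertices.
\end{itemize}
A perfect matching cannot serve in the last case: a vertex joined to three vertices that each carry a loop has none. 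The paper uses $F_n=k_nP_3+(n/2-2k_n)P_2+k_nP_1$, shown in the appendix to span every subcubic tree of even order $n$. That result drives the existence part, not the sharpness part.

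\textbf{Sharpness.} Your sketch fails as stated. A chain of $r-1$ cycles joined by paths has cyclomatic number $r-1$, not $r$. More importantly, one 2-switch cannot force many components. If $E(H)=(E(G)\setminus\{uv,xy\})\cup\{uy,xv\}$, then $E(G)\cap E(H)$ is a common spanning subgraph under the identity with at most three components. So some common spanning forest has at most three components, which contradicts the required bound once $r\ge5$. You need on the order of $r$ independent disagreements, and a rigorous obstruction rather than an appeal to ``incompatible cuts''.

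The paper uses your own count $|E(G)\setminus E(F)|=r-1+c$ in the form $\de(G,H)\le2(r-1+c)$. It then proves $\de(G_s,H_s)\ge4s$ by exhibiting $2s$ pairwise edge-disjoint subgraphs of $G_s$: the $4$-cycles $Q_i$ and copies $T_i$ of $S_{2,2}$. None of these occurs in $H_s$, whose cycles have lengths $3$ and $4s$ and whose adjacent degree-three vertices lie in triangles. Hence every bijection loses at least one edge from each. The case $r=2$ is handled by a separate six-vertex pair, one graph bipartite and the other containing two edge-disjoint triangles.
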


For graphs of common order $n$, Theorem~\ref{thm:main}
guarantees a vertex bijection that preserves at least
$n-r+1$ edges forming a spanning forest.
To measure edge disagreements under vertex relabeling,
we use the following distance, defined for equal-order graphs:
\begin{equation}\label{eq:metric}
\de(G,H)=\min_{\pi:V(G)\longrightarrow V(H)\ \mathrm{bijective}}
       |E(H)\mathbin{\triangle}\pi(E(G))|,
\end{equation}
where $\pi(E(G))=\{\{\pi(u),\pi(v)\}:\{u,v\}\in E(G)\}$.
The minimum is taken over all vertex bijections, with no requirement
to preserve color-refinement colors.
Let $\Dcr(r)$ denote the supremum of $\de(G,H)$ over connected
pairs $G\equiv_{\wl}H$ with $r(G)=r(H)=r$, with no restriction
on their common order.
For the connected pairs under consideration, each graph has
$n+r-1$ edges, so Theorem~\ref{thm:main} implies
$\de(G,H)\le4(r-1)$.
This upper bound is sharp for every $r\ge2$, as stated
in the following corollary.

\begin{corollary}\label{cor:diameter}
For every integer $r\ge2$,
\(
\Dcr(r)=4(r-1).
\)
\end{corollary}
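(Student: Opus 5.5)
The upper bound $\Dcr(r)\le4(r-1)$ follows from Theorem~\ref{thm:main}. Take a connected pair $G\equiv_{\wl}H$ of common order $n$ and cyclomatic number $r\ge2$. Let $F_G\subseteq G$ and $F_H\subseteq H$ be isomorphic spanning forests with $c\le r-1$ components, and let $\pi$ be a vertex bijection carrying $F_G$ onto $F_H$. Then $\pi(E(G))$ and $E(H)$ share at least $|E(F_H)|=n-c$ edges. Since $|E(G)|=|E(H)|=n+r-1$, we get
\[
|E(H)\mathbin{\triangle}\pi(E(G))|\le 2\bigl((n+r-1)-(n-c)\bigr)=2(r-1+c)\le 4(r-1).
\]
Taking the supremum over all such pairs gives the upper bound.

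For the lower bound, I plan to reuse the sharpness family from Theorem~\ref{thm:main}. For each $r\ge2$ it supplies a pair $G_r\equiv_{\wl}H_r$ of maximum degree at most three in which every common spanning forest has at least $r-1$ components. The key observation is a converse to the computation above. Fix any bijection $\pi$ and let $S=\pi(E(G))\cap E(H)$ be the set of common edges. The graph $(V(H),S)$ is a common subgraph, so every spanning forest inside it is a common spanning forest. Such a forest therefore has at least $r-1$ components, so $(V(H),S)$ itself has at least $r-1$ components. It follows that $(V(H),S)$ has at most $n-(r-1)$ edges in any spanning forest. But $S$ may contain cycles, so this alone does not bound $|S|$ by $n-r+1$. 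This is the main obstacle: the number of common edges equals $n-c'+(\text{cycle rank of }S)$, where $c'$ is the number of components of $(V(H),S)$, and we need this quantity to be at most $n-r+1$.

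I would handle this in one of two ways. The first is to use the structure of the sharpness construction directly. Presumably $G_r$ and $H_r$ are built from $r-1$ gadgets (for instance pieces with two cycles each, arranged so that the cycles of $G_r$ and $H_r$ cannot be matched), and the argument in Section~\ref{sec:sharpness} probably shows that no common subgraph can contain a cycle together with enough connectivity. If that argument in fact bounds the number of common edges of any common subgraph by $n-r+1$, the lower bound follows immediately. The second, as a fallback, is to strengthen the construction so that the girths or cycle lengths of $G_r$ and $H_r$ differ sufficiently. For example, if $G_r$ is chosen with girth larger than $n$ would allow for any cycle of $H_r$ to be an image of a cycle of $G_r$, then $S$ must be acyclic. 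Then $|S|=n-c'\le n-r+1$. In either case we get
\[
|E(H_r)\mathbin{\triangle}\pi(E(G_r))|=2\bigl((n+r-1)-|S|\bigr)\ge 2\bigl((n+r-1)-(n-r+1)\bigr)=4(r-1)
\]
for every $\pi$. Hence $\de(G_r,H_r)\ge4(r-1)$, and combined with the upper bound this gives $\Dcr(r)=4(r-1)$, with the supremum attained.
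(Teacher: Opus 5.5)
Your upper bound is exactly the paper's argument and is correct. The lower bound, however, is not proved, and the route you chose cannot be completed as stated.

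First, the route is circular with respect to this paper. The sharpness clause of Theorem~\ref{thm:main} is not proved independently. Section~\ref{sec:sharpness} deduces it from the equality $\de(G,H)=4(r-1)$ for the constructed pairs, which is the very inequality you want.

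Second, even granting that clause as a black box, it does not control common subgraphs $S$ that contain cycles, as you note yourself. A common subgraph with $r-1$ components and a single cycle would already allow cost $4(r-1)-2$. Neither of your remedies is carried out: the first is a guess about what Section~\ref{sec:sharpness} contains, and the second proposes a new family without constructing it. The condition in the second remedy is also misstated. To force $S$ to be acyclic, you need $G_r$ and $H_r$ to have no cycle length in common, since a cycle of $S$ is a cycle of the same length in both graphs; this is not a girth condition. And for any new family you would still have to prove that every common spanning forest has at least $r-1$ components, which is the entire difficulty.

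The missing ingredient is a direct lower bound on the number of edges every bijection destroys. Since $|E(H)\mathbin{\triangle}\pi(E(G))|=2(m-|S|)$, it suffices to show that at least $2(r-1)$ edges of $G$ are not mapped to edges of $H$. The paper does this by packing forbidden subgraphs. For $s=r-1\ge2$, $G_s$ contains $2s$ pairwise edge-disjoint subgraphs: the $4$-cycles $Q_i$ and copies $T_i$ of $S_{2,2}$. $H_s$ has no $4$-cycle, since its cycles have lengths $3$ and $4s$. It also has no $S_{2,2}$, since every edge joining two degree-three vertices lies in a triangle $b_ip_iq_i$. So each $\pi$ loses an edge from each of these subgraphs, and $\de(G_s,H_s)\ge4s$. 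For $r=2$, $\pi(G)$ is bipartite and must miss an edge of each of the two edge-disjoint triangles of $H$.

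The component sharpness in Theorem~\ref{thm:main} is then derived from this bound, not used to prove it. Incidentally, the paper's pairs do have disjoint cycle-length sets: $\{4,3s\}$ versus $\{3,4s\}$, and $\{4,6\}$ versus $\{3\}$ for $r=2$. So your acyclicity reduction applies to them. But it only makes the distance bound equivalent to the component bound, and one of the two still needs an independent proof such as the packing argument.
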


For $r=0$, any two connected color-refinement-equivalent graphs
are isomorphic, since color refinement distinguishes trees up
to isomorphism~\cite[Theorem~2.5]{Ramana1994}.
The same holds for $r=1$: two connected unicyclic graphs
that are equivalent under color refinement are
isomorphic~\cite[proof of Lemma~12]{Verbitsky2025}.
Consequently, $\Dcr(0)=\Dcr(1)=0$.
Together with these results, Corollary~\ref{cor:diameter}
determines $\Dcr(r)$ for every nonnegative integer $r$.

Common-subgraph problems have been studied in several algorithmic
settings, including approximation algorithms for pairs of
forests~\cite{RautenbachWerner2026}, parameterized algorithms for
spanning forest isomorphism and containment with a tree as the
target~\cite{LiuEtAl2026}, and integer-programming formulations for
the maximum common edge subgraph problem~\cite{deGastinesKnippel2024}.
Our work instead establishes a sharp structural guarantee for common
spanning forests of connected color-refinement-equivalent graphs,
with the cyclomatic number controlling the number of components.

The associated edge symmetric-difference distance is closely related
to graph similarity and approximate isomorphism. In particular,
$\de(G,H)$ is half the minimum squared Frobenius distance between
the adjacency matrices under vertex relabelings
\cite{GroheRattanWoeginger2018}.
Dahan et al.~\cite{DahanGroheNeuenNovotny2026} study additive
$\varepsilon n^2$ approximations to graph edit distance for graphs
of bounded VC dimension, as well as the distinguishing power of
higher-dimensional Weisfeiler--Leman refinement under edit-distance
separation promises.
B\"oker's tree distance~\cite{Boker2021}, by contrast, vanishes on
fractionally isomorphic graphs, whereas $\de(G,H)$ can be positive
even when $G\equiv_{\wl}H$.
Our result determines the exact maximum of this edge distance
among connected color-refinement-equivalent graphs of fixed
cyclomatic number, without restricting their common order.

The remainder of the paper is organized as follows.
Section~\ref{sec:preliminaries} develops the kernel reduction
and a lemma for lifting spanning forests to the original graphs.
Section~\ref{sec:kernel} constructs the required kernel forests
and establishes the existence part of Theorem~\ref{thm:main}
and the upper bound in Corollary~\ref{cor:diameter}.
Section~\ref{sec:sharpness} establishes the sharpness of both
bounds. The auxiliary theorem on subcubic trees used in
Section~\ref{sec:kernel} is proved in
Appendix~\ref{app:induction}.

%% file: sections/02_preliminaries.tex
\section{The kernel reduction}
\label{sec:preliminaries}

In this section, we reduce the existence part of
Theorem~\ref{thm:main} to finding suitable spanning forests
in the kernels of two graphs.

Let $G\equiv_{\wl}H$ be two connected graphs with common
cyclomatic number $r\ge2$, and let $\chi$ be the stable
coloring obtained by running color refinement on
$G\sqcup H$. It is well known that $\chi$ is an
\emph{equitable coloring} (meaning that, for every pair
of colors $i,j$, all vertices of color $i$ have the
same number of neighbors of color $j$) and that each
color occurs equally often in $G$ and
$H$~\cite{Ramana1994,Arvind2017}.

Let $C_G:=\core(G)$ and $C_H:=\core(H)$ denote the
\emph{$2$-cores} of $G$ and $H$, respectively.
Since the $2$-core is obtained by repeatedly deleting
all vertices of degree at most one, equitability implies
that at each step all vertices of a given stable color
are deleted together. Thus equally colored vertices
either both belong to their respective $2$-cores or
both lie outside them~\cite[Claim~16]{Verbitsky2025}.
Equitability also implies that equally colored core
vertices have the same degree within their respective
cores. Let $K_G$ and $K_H$ denote the \emph{kernels}
obtained by suppressing all degree-two vertices in
$C_G$ and $C_H$~\cite{Verbitsky2025}.
Since $r\ge2$, the kernels $K_G$ and $K_H$ are connected
multigraphs, possibly with loops and parallel edges, whose
vertices are precisely the \emph{branch vertices}, namely
the vertices of degree at least three in the respective
$2$-cores. As usual, loops contribute two to vertex degrees,
and parallel edges are counted with multiplicity.
It follows that $K_G$ and $K_H$ have the
same number $b$ of vertices and the same number $q$
of edges. Removing leaves
and suppressing degree-two vertices preserve the
cyclomatic number, giving $r=q-b+1$. Since every
kernel vertex has degree at least three, we also have
$2q\ge3b$.
For each vertex $v$ in $C_G$ or $C_H$, let $T_v$ be
the component containing $v$ after deleting all edges
of the corresponding $2$-core, rooted at $v$.
Each $T_v$ is a tree containing no other core vertex.
The \emph{attached trees} $T_v$ and $T_w$ are isomorphic
as rooted trees whenever $\chi(v)=\chi(w)$~\cite[Claim~15]{Verbitsky2025}.

The kernel records the connections between branch vertices
but not the vertices suppressed along its edges.
For each incidence of a kernel edge at a branch vertex $v$,
write the corresponding path in the $2$-core as
$x_0,x_1,\ldots,x_\ell$, starting at that incidence,
so that $x_0=v$ and $x_\ell$ is also a branch vertex,
possibly with $x_0=x_\ell$.
We associate with this incidence the sequence of stable colors
\[
(\chi(x_0),\chi(x_1),\ldots,\chi(x_\ell)).
\]
The other incidence of the same kernel edge gives the
reversed sequence.
Both incidences of a loop are counted separately,
even when their sequences coincide.
For each branch vertex $v$ in either $G$ or $H$, let $M(v)$ denote the
multiset of these sequences over all incidences at $v$.
Thus $M(v)$ contains $\deg_K(v)$ entries, counted with
multiplicity, where $K$ is the kernel containing $v$.

We have the following consequence of equitability.

\begin{lemma}\label{lem:pathdata}
If $v$ and $w$ are branch vertices with
$\chi(v)=\chi(w)$, then $M(v)=M(w)$.
\end{lemma}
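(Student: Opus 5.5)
The plan is to build a color-preserving bijection between the core-incidences at $v$ and those at $w$, walk each path step by step, and check that the color sequences agree. Two facts about core vertices drive this. First, for a core vertex $x$, the number of core neighbors of $x$ of each color $c$ is determined by $\chi(x)$. Indeed, by the fact cited above, a vertex lies in the $2$-core exactly when every vertex of its color does. So the core neighbors of color $c$ are all neighbors of color $c$ when color $c$ is a core color, and none otherwise. Equitability then fixes their number. Second, also by the cited fact, whether a core vertex is a branch vertex (core degree $\ge3$) or a suppressed vertex (core degree $2$) depends only on its color.

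The key step is to compare the incidences at $v$ and $w$. The incidences at $v$ correspond bijectively to the core edges at $v$, i.e.\ to the core neighbors $x_1$ of $v$. A loop gives two such neighbors, one for each direction; this is consistent, since a loop at $v$ has length $\ell\ge3$ in a simple graph and so uses two distinct core edges at $v$. By the first fact, $v$ and $w$ have the same number of core neighbors of each color. Fix, for each color $c$, a bijection between the core neighbors of $v$ of color $c$ and those of $w$. This gives a bijection between the incidences at $v$ and at $w$ under which the initial segments $(\chi(x_0),\chi(x_1))$ agree.

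Next I extend the agreement along the path by induction. Take a matched pair of incidences with paths $x_0,x_1,\dots$ and $y_0,y_1,\dots$, and suppose $\chi(x_i)=\chi(y_i)$ for all $i\le k$, where $k\ge1$ and $x_k$ is not a branch vertex. By the second fact, $y_k$ is not a branch vertex either. So both have core degree $2$, and their unique next vertices are the core neighbors other than $x_{k-1}$ and $y_{k-1}$ respectively. The multiset of colors of the two core neighbors of $x_k$ equals that of $y_k$, by the first fact. Removing the equal colors $\chi(x_{k-1})=\chi(y_{k-1})$ from both multisets leaves $\chi(x_{k+1})=\chi(y_{k+1})$. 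By the second fact, the first index at which a branch vertex is reached is the same for both paths. Hence $\ell$ agrees and the full sequences coincide.

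The matching is a bijection on incidences, counted with multiplicity and including both incidences of each loop, and matched incidences carry identical sequences. Therefore $M(v)=M(w)$.

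The main obstacle is the bookkeeping: making precise that the incidences at a branch vertex correspond exactly to its core edges, handling loops and parallel kernel edges correctly, and justifying the core-degree-by-color claim from the cited result on $2$-cores. The inductive walk itself is routine once these are in place.
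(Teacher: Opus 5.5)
Your proof is correct and follows essentially the same route as the paper: both use equitability together with color-determined core membership to show that the color of a branch vertex fixes the multiset of colors of its core neighbors, and that along a path of degree-two core vertices each next color, as well as the point where a branch vertex is reached, is forced by the previous two colors. Your explicit color-preserving bijection of incidences with a pairwise induction is a more carefully bookkept version of the paper's observation that the color of the first neighbor determines the entire sequence, including the treatment of loops.
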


\begin{proof}
By equitability and the fact that core membership is
determined by stable colors, the color of a branch
vertex determines the multiset of colors of its
neighbors in the $2$-core.

Consider a core path $x_0,x_1,\ldots,x_\ell$
starting at a branch vertex $x_0$.
If $x_1$ is a branch vertex, the sequence ends there.
Otherwise, $x_1$ has degree two in the core, and its
stable color determines the multiset of colors of its
two core neighbors.
Since one of these neighbors is $x_0$, the colors
$\chi(x_0)$ and $\chi(x_1)$ uniquely determine
$\chi(x_2)$.

The same argument applies at each subsequent internal
vertex of the path.
Moreover, since core degrees are determined by stable
colors, the sequence also determines when the next
branch vertex is reached.
Thus, for a fixed color of $x_0$, the color of its
first neighbor determines the entire sequence.
As the initial neighbor colors occur with the same
multiplicities at equally colored branch vertices,
the asserted equality $M(v)=M(w)$ follows.
\end{proof}


For a spanning forest $\mathcal F$ of a kernel,
we call its edges \emph{selected} and the
remaining kernel edges \emph{unselected}.
The following lemma uses
Lemma~\ref{lem:pathdata} to lift suitably
matched spanning forests of the kernels
to the original graphs.

\begin{lemma}\label{lem:lift}
Let $\mathcal F_G$ and $\mathcal F_H$ be spanning
forests of $K_G$ and $K_H$, each with $a$ edges.
Suppose there is an isomorphism
\[
f:\mathcal F_G\longrightarrow\mathcal F_H
\]
that preserves the stable colors of the branch
vertices. Assume further that for every edge $uv$
of $\mathcal F_G$, the core paths represented by
$uv$ and $f(u)f(v)$ have the same sequence of
stable colors when read from $u$ to $v$ and from
$f(u)$ to $f(v)$, respectively.
Then $G$ and $H$ have isomorphic spanning forests
with exactly $b-a$ components, each obtained by
deleting exactly $q-a$ edges from the corresponding
graph.
\end{lemma}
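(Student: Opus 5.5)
Starting from the kernel forests, we construct the forests in $G$ and $H$ in three layers: the selected kernel paths, then the unselected kernel paths, then the attached trees. We build a bijection $\pi:V(G)\to V(H)$ that preserves stable colors, together with edge sets $E_G'\subseteq E(G)$ and $E_H'\subseteq E(H)$ such that $\pi$ maps $E_G'$ onto $E_H'$.

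\emph{Selected paths.} For each selected edge $uv$ of $\mathcal F_G$, the core paths for $uv$ and $f(u)f(v)$ have the same color sequence, so in particular the same length. We map them vertex by vertex, matching $u$ to $f(u)$. All edges of these paths are included. On branch vertices this agrees with $f$, so the assignment is consistent. Distinct kernel edges have disjoint interiors, so there are no conflicts.

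\emph{Unselected paths.} The main step is to show that the multisets of color sequences of the unselected incidences agree at $v$ and $f(v)$. Lemma~\ref{lem:pathdata} gives $M(v)=M(f(v))$. The selected incidences at $v$ correspond bijectively under $f$ to those at $f(v)$, with equal sequences by hypothesis. Removing them from both multisets leaves equal multisets of unselected sequences.

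Globally, we must pair each unselected kernel edge of $G$ with one of $H$ having the same color sequence in some orientation. Equal sequences force equal lengths, and a loop must be paired with a loop at the corresponding vertex. I would not try to pair edges consistently with $f$ at both ends, which may be impossible. Instead, for each unselected core path $x_0,\dots,x_\ell$ in $G$, we delete exactly one edge, say $x_0x_1$. Its interior $x_1,\dots,x_{\ell-1}$ then becomes a path hanging off $x_\ell$, and we keep the remaining $\ell-1$ edges.

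This requires an incidence-level matching rather than an edge-level one. Concretely, orient each unselected kernel edge of $G$ toward the endpoint where its interior path stays attached. We then need an orientation of the unselected edges of $H$ such that, at each $f(v)$, the multiset of color sequences of incoming incidences matches that at $v$.

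A simpler route is available. The interior of each unselected path, together with the kept edges, forms a pendant path whose color sequence is determined by the incidence at its attachment point. Hence we may take the deleted edge to be the one at whichever end we choose. At each branch vertex $v$ the unselected incidences match those at $f(v)$ by sequence. We choose the multiset of "kept-at" incidences in $G$ arbitrarily, for example by orienting each unselected edge toward one end. For each kept incidence at $v$, with sequence $s$, we map its pendant path to an unused pendant path at $f(v)$ with sequence $s$.

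This works only if the $H$ side can realize the same kept incidences. I expect this to be the main obstacle. I would resolve it by keeping the edge at both ends when $\ell\ge 2$: delete a middle edge, or more precisely any single edge, and split the interior into two pendant paths, one at each endpoint. Each incidence then contributes a pendant path determined by its own sequence, except for one edge.

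Making this symmetric requires a per-incidence rule that depends only on the sequence. For a sequence of length $\ell$ read from the incidence, keep its first $\lfloor(\ell-1)/2\rfloor$ edges, and give the middle edge or vertex a convention depending on the lexicographic order of the sequence and its reverse. This is the delicate point. It ensures that each unselected path loses exactly one edge and that its surviving vertices are partitioned into two pendant paths, each determined by the sequence at its own incidence. Matching unselected incidences at $v$ and $f(v)$ by equal sequences then extends $\pi$ to these pendant paths, with color preservation. For $\ell=1$ the only edge is deleted and no vertices are involved.

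\emph{Attached trees.} Every core vertex $x$ of $G$ now has an image $\pi(x)$ with $\chi(\pi(x))=\chi(x)$. The rooted trees $T_x$ and $T_{\pi(x)}$ are isomorphic, so we extend $\pi$ by these isomorphisms and include all tree edges.

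\emph{Counting.} The resulting $E_G'$ contains all edges of $G$ except one per unselected kernel edge, so exactly $q-a$ edges are deleted. Then $|E_G'|=n+r-1-(q-a)=n-b+a$ using $r=q-b+1$. $E_G'$ is acyclic: every cycle of $G$ lies in the core, and contracting the remaining core structure yields $\mathcal F_G$ with pendant paths attached. Therefore it is a spanning forest with $n-|E_G'|=b-a$ components. The same holds in $H$, and $\pi$ is an isomorphism between the two forests.
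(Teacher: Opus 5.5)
Your splitting rule cannot work for unselected core paths whose color sequence $\sigma$ is a palindrome with $\ell$ even. Such a path has $\ell-1$ interior vertices, which is an odd number. Since exactly one edge is deleted, the middle vertex $x_{\ell/2}$ must end up in the pendant path of exactly one of the two incidences. But both incidences read the same sequence $\sigma=\sigma^{R}$. So any rule depending only on the sequence keeps the same number $k$ of edges at both ends, giving $2k\neq\ell-1$ kept edges. The lexicographic comparison of $\sigma$ with its reverse has nothing to break.

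The smallest instance is $\ell=2$ with $\chi(x_0)=\chi(x_2)$, and it is not exotic. If $G$ and $H$ are obtained by subdividing once every edge of two connected cubic graphs of the same order, every kernel edge has a sequence of this form. For these paths you are back at exactly the obstacle you identified earlier. You must choose, in both kernels, which end receives the middle vertex, so that at every branch vertex $v$ the number of $\sigma$-incidences receiving it equals the number at $f(v)$. An arbitrary choice in $K_G$ need not be reproducible in $K_H$.

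What is missing is a global pairing argument, and once you have one, the splitting becomes unnecessary. The paper forms a bipartite graph $\Gamma$ whose classes are the unselected kernel edges of $K_G$ and $K_H$. It joins $e$ and $e'$ when $e$ has an end at some $v$ and $e'$ has an end at $f(v)$ reading the same sequence. The unselected ends at $v$ and at $f(v)$ have equal sequence multisets. Hence the $2|S|$ ends of the edges in any set $S$ inject into ends of edges of $N_\Gamma(S)$, so $|S|\le|N_\Gamma(S)|$, and Hall's theorem gives a perfect matching. Each matched pair is written as $x_0,\dots,x_\ell$ and $y_0,\dots,y_\ell$ with $y_0=f(x_0)$ and equal colors. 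The whole interior is kept hanging off $x_0$ and $y_0$, and only the terminal edges $x_{\ell-1}x_\ell$ and $y_{\ell-1}y_\ell$ are deleted. Thus the far endpoints never need to correspond under $f$.

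If you prefer to keep your split construction, you can repair just the even palindromic classes. Pair the $\sigma$-ends at each $v$ bijectively with those at $f(v)$. Together with the edges, these pairings decompose the $\sigma$-edges of both kernels into closed trails alternating between $K_G$ and $K_H$. Orient the $K_G$-edges along each trail and the $K_H$-edges against it. Then paired ends are both heads or both tails, and you can give the middle vertex to the head end.

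The remaining parts of your proposal agree with the paper: the multiset subtraction, the treatment of selected paths, the attached trees, and the edge count.
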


\begin{proof}
For every branch vertex $v$ of $K_G$,
Lemma~\ref{lem:pathdata} gives
\(
M(v)=M(f(v)).
\)
Since $f$ matches the selected edges and their
color sequences, these edges contribute the same
sequences to $M(v)$ and $M(f(v))$.
It follows that the sequences contributed by the
unselected edges also form equal multisets.

We now construct a bipartite graph $\Gamma$ whose
vertex classes are the unselected edges of
$K_G$ and $K_H$, respectively.
Two vertices of $\Gamma$, representing edges
$e$ of $K_G$ and $e'$ of $K_H$, are adjacent
if there exists a branch vertex $v$ incident
with $e$ such that $f(v)$ is incident with
$e'$ and the corresponding core paths have
the same color sequence when read from $v$
and $f(v)$, respectively.
For a loop, both directions along its
core path are considered.

We claim that $\Gamma$ has a perfect matching.
Let $S$ be any subset of the vertex class of
$\Gamma$ corresponding to $K_G$, and let
$N_\Gamma(S)$ denote its set of neighbors.
Each vertex of $S$ represents an unselected
kernel edge of $K_G$, which has two ends.
Thus these edges have exactly $2|S|$
ends in total.
Fix a branch vertex $v$ of $K_G$ and a color
sequence. The number of ends of unselected
edges at $v$ with this sequence equals the
corresponding number at $f(v)$, as shown above.
Since $S$ contains only some of the unselected
edges, we can assign each end of an edge in
$S$ at $v$ to a distinct end of an unselected
edge at $f(v)$ having the same sequence.
Performing this assignment for every branch
vertex and every color sequence gives
$2|S|$ distinct ends of unselected edges
of $K_H$. By the definition of $\Gamma$, all these ends
belong to kernel edges represented by vertices
in $N_\Gamma(S)$.
Since these kernel edges have exactly
$2|N_\Gamma(S)|$ ends in total, we obtain
\[
2|S|\le 2|N_\Gamma(S)|.
\]
Thus $|S|\le |N_\Gamma(S)|$ for every $S$.
Since both vertex classes of $\Gamma$ have
$q-a$ vertices, Hall's theorem~\cite{Hall1935}
gives a perfect matching of $\Gamma$.

Let $\mathcal M$ be such a perfect matching.
We now construct spanning subgraphs of $C_G$ and
$C_H$ and extend $f$ to a bijection
\[
\varphi:V(C_G)\longrightarrow V(C_H).
\]
For every branch vertex $v$ of $C_G$,
set $\varphi(v):=f(v)$.
Consider an edge of $\mathcal F_G$ and
its corresponding edge of $\mathcal F_H$.
Write the core paths represented by
these selected kernel edges as
$x_0,x_1,\ldots,x_\ell$ and
$y_0,y_1,\ldots,y_\ell$, where
$y_0=f(x_0)$ and $y_\ell=f(x_\ell)$.
By the hypothesis of the lemma, these
paths have the same color sequence.
We define $\varphi(x_i):=y_i$ for
$1\le i<\ell$ and include all edges
of both paths in the respective subgraphs.

Next, consider a pair of unselected
kernel edges matched by $\mathcal M$.
By the definition of $\Gamma$, we can
write their corresponding core paths as
$x_0,x_1,\ldots,x_\ell$ and
$y_0,y_1,\ldots,y_\ell$ so that
$y_0=f(x_0)$ and
$\chi(x_i)=\chi(y_i)$ for
$0\le i\le\ell$.
For these paths, we define
$\varphi(x_i):=y_i$ for $1\le i<\ell$.
We include all edges of these paths except
the terminal edges $x_{\ell-1}x_\ell$
and $y_{\ell-1}y_\ell$ in the respective subgraphs.
This construction also applies when
$\ell=1$ or when a core path returns
to its starting branch vertex.

Since $\mathcal M$ matches every unselected
kernel edge exactly once, all core paths have
now been considered.
Every core vertex is either a branch vertex
or an internal vertex of exactly one of these
paths.
Distinct core paths have disjoint interiors,
and matched paths have the same length.
Together with the correspondence $f$ on branch
vertices, the assignments above therefore
define a bijection
\(
\varphi:V(C_G)\longrightarrow V(C_H).
\)
By construction, $\varphi$ preserves stable
colors, and the edges chosen along corresponding
core paths agree under $\varphi$.
Thus $\varphi$ is an isomorphism between the
two spanning subgraphs of the cores.

We next extend $\varphi$ to the original
graphs using the attached trees.
For every $v\in V(C_G)$, the rooted
trees $T_v$ and $T_{\varphi(v)}$
are isomorphic because
$\chi(v)=\chi(\varphi(v))$.
Choose a rooted isomorphism between
them that agrees with $\varphi$
at the root.
Since distinct attached trees meet
the cores only at their roots,
these isomorphisms extend $\varphi$
to a bijection
\[
\varphi:V(G)\longrightarrow V(H).
\]
In each original graph, we take the union of
the constructed spanning subgraph of its core
and all the attached trees.
The resulting subgraphs span $G$ and $H$,
respectively.

By construction, the resulting spanning subgraphs
of $G$ and $H$ are isomorphic under $\varphi$.
They are also forests.
Indeed, the core paths corresponding to selected
kernel edges connect the branch vertices exactly
as the edges of $\mathcal F_G$ and $\mathcal F_H$
do, and hence form forests.
For each unselected kernel edge, the retained
part of its core path meets the branch vertices
only at its starting vertex.
Thus it cannot create a cycle or join two
components of the selected forests.
The same holds for each attached tree $T_v$,
which meets the core only at its root.

Since $\mathcal F_G$ and $\mathcal F_H$ are spanning forests
on $b$ vertices with $a$ edges, each has exactly $b-a$ components.
The constructed spanning forests therefore have exactly $b-a$
components as well.

Finally, all edges of the selected core paths and the attached trees
are included in the constructed forests.
For each of the $q-a$ unselected kernel edges, exactly one edge of
its corresponding core path is omitted.
Hence each spanning forest is obtained by deleting exactly
$q-a$ edges from the original graph, as claimed.
\end{proof}

Consequently, to establish the existence part of
Theorem~\ref{thm:main}, it suffices to find spanning
forests of $K_G$ and $K_H$ satisfying the hypotheses
of Lemma~\ref{lem:lift} with $b-a\le r-1$.
Since this inequality is equivalent to
$a\ge b-r+1$, and $a\ge0$, the required bound is
\[
a\ge\max\{0,b-r+1\}=\max\{0,2b-q\},
\]
where the last equality follows from $r=q-b+1$.

%% file: sections/03_common_forests.tex
\section{Constructing forests in the kernels}
\label{sec:kernel}

In this section, we construct the spanning forests of the kernels needed
for the existence part of Theorem~\ref{thm:main}.
We first use the stable colors of branch vertices and the color sequences
of core paths to construct such forests, except in a special cubic case.
We then resolve that case using a universal forest theorem for subcubic trees.

\subsection{Color-based forest constructions}
\label{subsec:color-forests}

Index the stable colors of the branch vertices by $1,\ldots,h$.
For each $i$, let $B_i^G$ be the set of branch vertices of color $i$ in $G$,
and define $B_i^H$ similarly.
These sets have the same size because stable colors determine which vertices
are branch vertices and occur equally often in $G$ and $H$.
Let $s_i:=|B_i^G|=|B_i^H|$ denote their common size.

We first show how a bijection on one color class can be extended to another
while pairing corresponding kernel edges.

\begin{lemma}\label{lem:hall}
Let $i\ne j$, and let $\omega$ be the sequence of stable colors along a core path
represented by an edge of $K_G$ from a vertex of color $j$ to one of color $i$.
For any bijection $f_i:B_i^G\to B_i^H$, there exists a bijection
$f_j:B_j^G\to B_j^H$ with the following property.
For every $x\in B_j^G$, some $y\in B_i^G$ is joined to $x$ by an edge of $K_G$,
and $f_i(y)$ is joined to $f_j(x)$ by an edge of $K_H$, such that the corresponding
core paths both have the sequence $\omega$ when read from their color-$j$ endpoints.
\end{lemma}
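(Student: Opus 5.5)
We will obtain $f_j$ from a perfect matching in an auxiliary bipartite graph, using Hall's theorem in the form for biregular bipartite multigraphs. By Lemma~\ref{lem:pathdata}, every $x\in B_j^G\cup B_j^H$ has the same number $m\ge1$ of incidences whose color sequence is $\omega$; here $m\ge1$ because $\omega$ occurs at some vertex of $B_j^G$. Each such incidence leads along a core path to a branch vertex whose color is the last entry of $\omega$, namely $i$. Reading the same paths from the other end, the reversed sequence $\bar\omega$ occurs at every vertex of $B_i^G\cup B_i^H$ the same number $m'$ of times, again by Lemma~\ref{lem:pathdata}. Since $i\ne j$, no such path is a loop. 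Hence the $\omega$-paths define a bipartite multigraph $P_G$ between $B_j^G$ and $B_i^G$ in which each vertex of $B_j^G$ has degree $m$ and each vertex of $B_i^G$ has degree $m'$. The analogous multigraph $P_H$ has the same degrees.

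Now form a bipartite graph $\Lambda$ with classes $B_j^G$ and $B_j^H$. Join $x\in B_j^G$ to $x'\in B_j^H$ whenever there exist $y\in B_i^G$ adjacent to $x$ in $P_G$ such that $f_i(y)$ is adjacent to $x'$ in $P_H$. A perfect matching of $\Lambda$ is exactly a bijection $f_j$ with the required property, so it suffices to verify Hall's condition.

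Let $S\subseteq B_j^G$. The set $S$ sends $m|S|$ edges of $P_G$ into $B_i^G$. Let $Y$ be the set of their endpoints in $B_i^G$. Since each $y\in Y$ has degree $m'$, we get $m|S|\le m'|Y|$. Every $y\in Y$ has a neighbor in $S$, so all $P_H$-neighbors of $f_i(y)$ lie in $N_\Lambda(S)$. Thus the $m'|Y|$ edges of $P_H$ at $f_i(Y)$ all land in $N_\Lambda(S)$. Each vertex of $N_\Lambda(S)$ receives at most $m$ such edges, which gives
\[
m|S|\le m'|Y|\le m|N_\Lambda(S)|.
\]
Therefore $|S|\le|N_\Lambda(S)|$. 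Since $|B_j^G|=|B_j^H|=s_j$, Hall's theorem~\cite{Hall1935} yields a perfect matching of $\Lambda$, and this matching is the desired $f_j$.

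The main obstacle is the setup rather than the counting. One must make sure that the degree of each side in $P_G$ and $P_H$ is constant and equal across $G$ and $H$. Counting incidences with multiplicity handles parallel edges, and for this the regularity supplied by Lemma~\ref{lem:pathdata} is essential. One must also check that every $\omega$-path starting in $B_j$ indeed ends in $B_i$, which holds because $\omega$ itself records the endpoint color $i$.
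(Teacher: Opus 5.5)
Your proof is correct and essentially the paper's: the paper builds the same auxiliary bipartite graph between $B_j^G$ and $B_j^H$, but keeps one edge for each triple $(y,e,e')$. That multigraph is then $k_ik_j$-regular, and a perfect matching follows from the theorem for regular bipartite multigraphs. Your chain $m|S|\le m'|Y|\le m|N_\Lambda(S)|$ checks Hall's condition directly on the underlying simple graph and amounts to the same double counting.
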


\begin{proof}
If a kernel edge represents a core path with color sequence $\omega$ when
read from its endpoint of color $j$, the same path has the reversed
sequence when read from its endpoint of color $i$.
By Lemma~\ref{lem:pathdata}, the multiset $M(v)$ depends only on the stable
color of $v$, whether $v$ belongs to $G$ or $H$.
Thus every branch vertex of color $j$ is incident with exactly $k_j>0$
kernel edges whose core paths have the sequence $\omega$ when read from
that vertex.
Similarly, every branch vertex of color $i$ is incident with exactly
$k_i>0$ kernel edges whose core paths have the reversed sequence
when read from that vertex.

Construct a bipartite multigraph $R$ with vertex classes $B_j^G$ and $B_j^H$.
For $x\in B_j^G$ and $x'\in B_j^H$, add an edge between them for each triple
$(y,e,e')$ such that $y\in B_i^G$, $e$ is an edge of $K_G$ joining $x$ to $y$,
$e'$ is an edge of $K_H$ joining $x'$ to $f_i(y)$, and the core paths represented
by $e$ and $e'$ both have the sequence $\omega$ when read from $x$ and $x'$,
respectively.
For each $x\in B_j^G$, there are $k_j$ choices for $e$, each determining
$y$, followed by $k_i$ choices for $e'$ at $f_i(y)$.
Hence $\deg_R(x)=k_i k_j$.
Similarly, for each $x'\in B_j^H$, there are $k_j$ choices for $e'$,
each determining $f_i(y)$ and hence $y$, followed by $k_i$ choices
for $e$ at $y$.
Thus $\deg_R(x')=k_i k_j$, so $R$ is a regular bipartite multigraph
of positive degree.

Therefore $R$ has a perfect matching~\cite[Theorem~2.2]{AkiyamaKano2011}.
Let $f_j:B_j^G\to B_j^H$ be the bijection defined by this matching.
For each $x\in B_j^G$, the edge in the matching joining $x$ to $f_j(x)$
corresponds to a triple $(y,e,e')$ as defined above.
Thus $e$ joins $x$ to $y$, while $e'$ joins $f_j(x)$ to $f_i(y)$,
and their core paths have the sequence $\omega$ when read from
$x$ and $f_j(x)$, respectively.
The chosen kernel edges are distinct in each kernel because their
endpoints of color $j$ are distinct.
\end{proof}

We next apply Lemma~\ref{lem:hall} successively to construct spanning forests
when $h\ge2$.

\begin{proposition}\label{prop:multiple}
If $h\ge2$, there exist spanning forests of $K_G$ and $K_H$ satisfying
the hypotheses of Lemma~\ref{lem:lift}, each with exactly
$b-\min_{1\le i\le h}s_i$ edges.
\end{proposition}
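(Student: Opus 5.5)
We build the forests one color class at a time by repeated use of Lemma~\ref{lem:hall}, in a spanning-tree-like order on the colors. Let $i_0$ be a color minimizing $s_i$; it will serve as a root class. Form the auxiliary ``color graph'' $\Lambda$ on $\{1,\ldots,h\}$, joining $i\ne j$ whenever some kernel edge of $K_G$ joins a vertex of color $i$ to one of color $j$. Since $K_G$ is connected and $h\ge2$, $\Lambda$ is connected: a partition of the colors into two nonempty sets with no $\Lambda$-edges between them would disconnect $K_G$. Fix a spanning tree $\Lambda_0$ of $\Lambda$ rooted at $i_0$, and order the colors $i_0=c_1,c_2,\ldots,c_h$ so that each $c_t$ ($t\ge2$) has its parent $p(c_t)$ among $c_1,\ldots,c_{t-1}$.

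Start with an arbitrary bijection $f_{c_1}:B^G_{c_1}\to B^H_{c_1}$. Inductively, for $t\ge2$, choose a core-path color sequence $\omega_t$ of some kernel edge from color $c_t$ to color $p(c_t)$; it exists because $c_t p(c_t)$ is an edge of $\Lambda$. Apply Lemma~\ref{lem:hall} with $j=c_t$, $i=p(c_t)$, and the already defined $f_i$. This gives $f_{c_t}$ together with, for each $x\in B^G_{c_t}$, a kernel edge $e_x$ from $x$ to some $y\in B^G_{p(c_t)}$ and a kernel edge $e'_x$ from $f_{c_t}(x)$ to $f_{p(c_t)}(y)$, with matching sequence $\omega_t$. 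Let $f$ be the union of all $f_{c_t}$; it is a color-preserving bijection between the branch-vertex sets. Select in $K_G$ all edges $e_x$ for $x$ in a non-root class, and in $K_H$ the corresponding $e'_x$. The map $e_x\mapsto e'_x$ then respects $f$ on endpoints and the color sequences read from $x$ to $y$ match, which are exactly the hypotheses of Lemma~\ref{lem:lift}.

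It remains to check that these edge sets are forests with $b-s_{i_0}$ edges, and to note that $s_{i_0}=\min_i s_i$. Each non-root branch vertex $x$ contributes exactly one selected edge $e_x$, its ``parent edge'', leading to a vertex whose color is strictly earlier in the order. The edges are distinct, as noted at the end of Lemma~\ref{lem:hall}, and distinct across classes because they are indexed by distinct lower endpoints. Hence there are $b-s_{i_0}$ edges. Acyclicity follows because every vertex has at most one parent edge and parent colors strictly decrease in the order. A cycle would contain a vertex of maximal order index, and both of its cycle edges would then have to be its parent edge. Loops cannot occur since $i\ne j$, and two parallel edges cannot both be selected for the same reason. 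The same argument applies in $K_H$ via $f$, since $e'_x$ is the parent edge of $f(x)$ there. Each forest consists of $s_{i_0}$ trees, each rooted at a vertex of color $i_0$.

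The main subtlety I expect is the bookkeeping required to ensure the $K_H$ side is genuinely a forest isomorphic to the $K_G$ side under $f$. In particular, the edge assigned to $f_{c_t}(x)$ must be its unique parent edge in $K_H$. This holds because $f_{c_t}$ is a bijection, so every vertex of $B^H_{c_t}$ receives exactly one $e'_x$. One must also verify that the Hall-type lemma's $k_i,k_j>0$ hypotheses are satisfied by the chosen $\omega_t$; this follows from Lemma~\ref{lem:pathdata}, since the sequence occurs at one vertex of each color and therefore at all of them.
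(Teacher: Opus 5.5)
Your proposal is correct and follows the paper's proof essentially step for step. Both arguments take a spanning tree of the color quotient rooted at a class of minimum size and apply Lemma~\ref{lem:hall} successively along it. Both select one parent edge per non-root branch vertex, which gives $b-\min_i s_i$ edges, and both prove acyclicity by looking at an extremal vertex on a putative cycle; you use your order index where the paper uses depth in the rooted tree, which is equivalent.
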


\begin{proof}
Let $Q$ be the simple quotient graph obtained from $K_G$ by identifying
branch vertices of the same stable color and removing loops and parallel edges.
Thus its vertices are the colors $1,\ldots,h$, with two distinct colors adjacent
if a kernel edge joins vertices of those colors.
By Lemma~\ref{lem:pathdata}, the same quotient graph is obtained from $K_H$.
The graph $Q$ is connected because $K_G$ is connected.

Put $s_*:=\min_{1\le i\le h}s_i$. Choose a spanning tree of $Q$ and root it
at a color $i_0$ with $s_{i_0}=s_*$. Choose any bijection between
$B_{i_0}^G$ and $B_{i_0}^H$.
Starting from the root, extend the bijection along the rooted spanning tree of $Q$.
Suppose that a bijection $f_i:B_i^G\to B_i^H$ has already been defined, and
let $j$ be a child of $i$. Choose an edge of $K_G$ joining these color classes,
and let $\omega$ be the sequence of stable colors along its corresponding core
path, read from its endpoint of color $j$.
By Lemma~\ref{lem:hall}, we obtain a bijection $f_j:B_j^G\to B_j^H$ and,
for each $x\in B_j^G$, a pair of corresponding kernel edges joining $x$
and $f_j(x)$ to vertices paired by $f_i$. Their core paths have the same
color sequence when read from their endpoints of color $j$.
Repeating this step throughout the rooted tree defines a stable-color-preserving
bijection $f$ between all branch vertices.
Let $\mathcal F_G$ and $\mathcal F_H$ be the spanning subgraphs of $K_G$
and $K_H$ containing all branch vertices and the edges chosen in this construction.
Every vertex outside the root class contributes exactly one edge joining
it to its parent color class. Since each color class has a unique parent
in the rooted tree of $Q$, no edge is chosen more than once.
The root class contains $s_*$ of the $b$ branch vertices, so there are
$b-s_*$ vertices outside it. Thus each subgraph has exactly $b-s_*$ edges.

We now show that $\mathcal F_G$ and $\mathcal F_H$ are forests.
Assign each branch vertex the depth of its color class in the rooted
spanning tree of $Q$. Every chosen edge joins vertices at consecutive depths.
Suppose that either subgraph contains a cycle, and choose a vertex of maximum
depth on that cycle. Both cycle edges incident with this vertex must lead
to its parent color class, contradicting the fact that the construction
chooses exactly one such edge for each vertex outside the root class.
Hence $\mathcal F_G$ and $\mathcal F_H$ are spanning forests.
By construction, $f$ is an isomorphism between $\mathcal F_G$ and
$\mathcal F_H$ that preserves stable colors. The corresponding core
paths have the same color sequence when read from their child endpoints,
and hence also when read from their parent endpoints.
Thus the hypotheses of Lemma~\ref{lem:lift} are satisfied.
\end{proof}

Since $h\ge2$, we have $\min_{1\le i\le h}s_i\le b/2$.
Together with $2q\ge3b$, this gives
\[
b-\min_{1\le i\le h}s_i\ge 2b-q.
\]

We next consider the case $h=1$. One special case arises when both kernels
are cubic and every kernel edge represents a core path with the same sequence
of stable colors, unchanged under reversal. We call this the
\emph{uniform cubic case}. The following lemma shows that, outside this
special case, we can find spanning forests satisfying the hypotheses of
Lemma~\ref{lem:lift} with at least $\max\{0,2b-q\}$ edges each.

\begin{lemma}\label{lem:reduction}
Suppose $h=1$ and the uniform cubic case does not occur.
Then there exist spanning forests of $K_G$ and $K_H$ satisfying the
hypotheses of Lemma~\ref{lem:lift}, each with at least
$\max\{0,2b-q\}$ edges.
\end{lemma}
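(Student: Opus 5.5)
The plan is to use the fact that $h=1$ forces every branch vertex to have the same kernel degree. Then we build, in each kernel, a spanning forest all of whose components are small paths whose edges carry matched color sequences.

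\emph{Reduction to the cubic case.} Since $h=1$, all branch vertices share one stable color, and hence one degree $d\ge3$ in the kernel. Thus $2q=db$. If $d\ge4$, then $2b-q=b(2-d/2)\le0$. In that case we take $a=0$: the edgeless forests with any bijection $f$ satisfy the hypotheses of Lemma~\ref{lem:lift} vacuously, because there is only one color. So we may assume both kernels are cubic, and we need forests with at least $2b-q=b/2$ edges. Equivalently, we need isomorphic forests with at most $b/2$ components. This holds in particular if every component has at least two vertices.

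\emph{Using the sequence types.} By Lemma~\ref{lem:pathdata}, all branch vertices of $G$ and $H$ carry the same multiset $M$ of three sequences. Since the uniform cubic case is excluded, there are two possibilities. Either $M$ contains a sequence $\omega$ with $\omega$ different from its reversal $\bar\omega$, or $M$ contains two distinct palindromic sequences. For a type $\sigma$, consider the spanning subgraph $K^\sigma$ of kernel edges whose core path has sequence $\sigma$ or $\bar\sigma$. Counting incidences shows that $K^\sigma_G$ and $K^\sigma_H$ are regular of the same degree $1$ or $2$. If the multiplicity were $3$ and $\sigma$ palindromic, we would be in the uniform case. If $\sigma\neq\bar\sigma$, each vertex has equally many $\sigma$- and $\bar\sigma$-incidences, so it has exactly one of each. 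Because $M$ is the same at every vertex, some type $\sigma$ gives a $1$-regular $K^\sigma$ without loops, or else every such subgraph is $2$-regular.

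We treat the cases as follows.
\begin{itemize}
\item \textbf{$1$-regular, loopless.} If some $K^\sigma$ is $1$-regular without loops, it is a perfect matching in both kernels. Any bijection between the matchings, oriented so that sequences agree (using $\sigma$ versus $\bar\sigma$ at the endpoints), gives forests with $b/2$ edges.
\item \textbf{Loops present.} If some type consists of loops, then every vertex has a loop. The remaining edge at each vertex forms a perfect matching of a single type, so the previous case applies to it.
\item \textbf{$2$-regular.} Otherwise some $K^\sigma$ is a disjoint union of cycles of length at least $2$. These are consistently directed when $\sigma\neq\bar\sigma$, and in either case each cycle has all edges of one type. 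We then decompose each cycle into directed subpaths with two or three vertices. For $\sigma\ne\bar\sigma$ we orient each subpath along the cycle, so corresponding edges read the same sequence from corresponding endpoints.
\end{itemize}

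\emph{Main obstacle.} The key difficulty is the last case: the two forests must be isomorphic, not merely have few components. Decomposing each cycle of length $L$ into $\lfloor L/2\rfloor$ paths gives one $P_3$ per odd cycle, and $G$ and $H$ may have different numbers of odd cycles. Since both have $b$ vertices, these numbers have the same parity, so the discrepancy is even. I would fix it by rebalancing. Replace three $K_2$'s inside a cycle of length at least $6$ by two $P_3$'s, or equivalently trade a $P_3$ together with a $K_2$ for a $P_4$-free rearrangement. The aim is that both forests end up with the same numbers of $K_2$ and $P_3$ components.

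If such rebalancing fails, for instance when all cycles are triangles in one kernel and $4$-cycles in the other, I would fall back on a different component type. One option is to use the third edge at each vertex to join paths. Another is to apply the regular bipartite matching argument of Lemma~\ref{lem:hall} with $i=j$, to pair each vertex of $G$ with one of $H$ through matched $\sigma$-edges. Either way the target is a common forest with no isolated vertices, which already suffices.
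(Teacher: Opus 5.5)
Your reduction to the cubic case and your first bullet are exactly the paper's construction. However, you never show that the first bullet always applies. The case you call the main obstacle is then left open: you concede that the rebalancing may fail, and the fallbacks are only sketched. In particular, Lemma~\ref{lem:hall} requires $i\ne j$ and a bijection already fixed on the other class, so it cannot be invoked with $i=j$. As written, this is a gap.

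The missing observation is that your third case is empty, and your own incidence count already shows it. At each branch vertex $v$, the degrees of $v$ in the subgraphs $K^\sigma$, taken over the distinct types $\{\sigma,\bar\sigma\}$ appearing in $M(v)$, add up to $3$.

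\begin{itemize}
\item A non-palindromic type has equally many $\sigma$-ends and $\bar\sigma$-ends at $v$, as you note, so it contributes an even degree.
\item A palindromic type contributes degree $1$ or $2$, since degree $3$ is the uniform cubic case.
\end{itemize}

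Because $3$ is odd, the palindromic types have odd total degree. Hence one of them has degree exactly $1$, and it does so at every vertex of both kernels because $M(v)$ is the same everywhere. A $1$-regular spanning subgraph contains no loop, since a loop contributes $2$ to the degree. It is therefore a perfect matching in each kernel, with $b/2=2b-q$ edges. Its edges read the same palindromic sequence from either end, so any pairing of the two matchings gives a color-preserving isomorphism satisfying the hypotheses of Lemma~\ref{lem:lift}.

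This is precisely the paper's argument, which shows directly that some palindromic sequence occurs exactly once in $M(v)$. With this observation, you can delete your loop bullet, your $2$-regular bullet, and the whole discussion of rebalancing $P_2$ and $P_3$ components.
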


\begin{proof}
As shown in Section~\ref{sec:preliminaries}, equally colored branch vertices
have the same degree in their respective $2$-cores.
Since $h=1$, all branch vertices have the same stable color, so both
kernels are $d$-regular for some $d\ge3$. Hence $2q=db$ and $2b-q=(4-d)b/2$.
If $d\ge4$, then $\max\{0,2b-q\}=0$, and the edgeless spanning forests
of $K_G$ and $K_H$ trivially satisfy the hypotheses of Lemma~\ref{lem:lift}.

Suppose now $d=3$. By Lemma~\ref{lem:pathdata}, the multiset $M(v)$ is
the same at every branch vertex of both kernels. 
The two ends of each kernel edge contribute mutually reversed color
sequences. Thus any sequence and its reversal occur equally often among
all ends of the kernel edges. As $M(v)$ is the same at every branch vertex,
they also occur with equal multiplicities in $M(v)$.

Each multiset $M(v)$ contains exactly three entries, as $d=3$.
Any sequence that differs from its reversal must occur together with
that reversal, with the same multiplicity. If such a pair occurs,
the remaining entry must be a sequence unchanged under reversal,
and it occurs exactly once in $M(v)$.
Otherwise, all three entries are unchanged under reversal.
If they are not all identical, at least one sequence occurs exactly once.
If all three entries are identical, then every kernel edge in both kernels
represents a core path with the same sequence, unchanged under reversal,
because $M(v)$ is the same at every branch vertex.
This is precisely the uniform cubic case, which is excluded by hypothesis.

We may therefore choose a sequence $\omega$ that is unchanged under
reversal and occurs exactly once in $M(v)$ at every branch vertex of
both kernels. In each kernel, consider all edges whose corresponding
core paths have the sequence $\omega$ when read from one endpoint.
Since $\omega$ is unchanged under reversal, each such edge contributes
one occurrence of $\omega$ at each of its two ends.
As $\omega$ occurs exactly once in every $M(v)$, each branch vertex
has exactly one end belonging to these edges.
In particular, none of these edges is a loop, and they form a
perfect matching in each kernel.

Pair the edges of the two perfect matchings arbitrarily and match
their endpoints in either order. Since every branch vertex belongs
to exactly one matching edge, this defines an isomorphism between
the two spanning matchings that preserves stable colors.
The corresponding core paths have the same color sequence in either
direction, and each matching has $b/2=2b-q$ edges.
Thus these matchings are spanning forests satisfying the hypotheses
of Lemma~\ref{lem:lift} with at least $\max\{0,2b-q\}$ edges each,
as claimed.
\end{proof}

\subsection{The uniform cubic case}
\label{subsec:uniform-cubic}

We now consider the uniform cubic case. All branch vertices have the same
stable color, and every kernel edge represents a core path with the same
color sequence, unchanged under reversal.
Thus any isomorphism between spanning forests of $K_G$ and $K_H$
satisfies the hypotheses of Lemma~\ref{lem:lift}.

Since both kernels are cubic, we have $2q=3b$, so $b$ is even and
the lower bound on the number of edges established in
Section~\ref{sec:preliminaries} becomes
\[
\max\{0,2b-q\}=2b-\frac{3b}{2}=\frac b2.
\]
It therefore suffices to find a single forest on $b$ vertices with $b/2$ edges
that occurs as a spanning subgraph of every connected cubic multigraph
on $b$ vertices.

For $j\ge1$, let $P_j$ denote the path on $j$ vertices.
In particular, $P_1$ is an isolated vertex. 
We use sums of graphs to denote their disjoint union.
For every even integer $n\ge2$, define $k_n:=\left\lfloor (n+2)/6 \right\rfloor$ and
\begin{equation}\label{eq:Fn}
F_n:=k_nP_3+\left(\frac n2-2k_n\right)P_2+k_nP_1.
\end{equation}
All coefficients are nonnegative, and $F_n$ has exactly $n$ vertices
and $n/2$ edges. The forest $F_n$ has the following property.

\begin{theorem}\label{thm:universal}
Every tree of maximum degree at most three and even order $n\ge2$
contains $F_n$ as a spanning subgraph.
\end{theorem}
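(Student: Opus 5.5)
The plan is to prove Theorem~\ref{thm:universal} by strong induction on $n$, cutting a small piece off the subcubic tree $T$. Write $k=k_n$, so $F_n$ consists of $k$ copies of $P_3$, $n/2-2k$ copies of $P_2$, and $k$ isolated vertices. Since $k_n$ is nondecreasing in $n$ and $k_{n}-k_{n-2}\in\{0,1\}$, the two cases of the induction will be:
\begin{itemize}
\item removing two vertices and using a $P_2$ (when $k_n=k_{n-2}$);
\item removing six vertices and using $P_3+P_2+P_1$ (when $k_n=k_{n-6}+1$).
\end{itemize}
Note that $F_n=F_{n-6}+P_3+P_2+P_1$ holds exactly when $k_n=k_{n-6}+1$, which is always true. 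Likewise $F_n=F_{n-2}+P_2$ exactly when $k_n=k_{n-2}$, which happens unless $n\equiv4\pmod 6$.

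The main step is therefore a decomposition lemma. Every subcubic tree of even order $n\ge 8$ contains a vertex set $S$ of size $6$ such that
\begin{itemize}
\item $T-S$ is a subcubic \emph{tree},
\item $T[S]$ contains $P_3+P_2+P_1$ as a spanning subgraph.
\end{itemize}
Given this, the induction reduces $n$ to $n-6$, and the base cases $n\in\{2,4,6\}$ are checked by hand:
\begin{itemize}
\item $n=2$: $F_2=P_2$.
\item $n=4$: $k_4=1$, so $F_4=P_3+P_1$, and every tree on $4$ vertices contains a $P_3$ together with a remaining vertex.
\item $n=6$: $k_6=1$, so $F_6=P_3+P_2+P_1$, which must be checked on the five subcubic trees of order $6$.
\end{itemize}
The reduction by six avoids the parity/modular bookkeeping of the $P_2$ step, though I would keep the two-vertex step available as a fallback. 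One can also allow $T-S$ to be a forest of subcubic trees each of even order, provided the statement is strengthened to forests, that is, $F$ is superadditive in the right sense. Superadditivity fails in general because $k$ is a floor function, so I would insist on $T-S$ being connected.

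To find $S$, I would root $T$ at a leaf and take a deepest vertex $v$, so that all descendants of $v$'s ancestors near the bottom are small. Then I would do a case analysis on the subtree $T_u$ hanging below the lowest ancestor $u$ whose subtree has at least $6$ vertices. Because the tree is subcubic, $u$ has at most two children, and each child subtree has at most $5$ vertices. There are only finitely many shapes for such subtrees of order $\le5$ with maximum degree $\le 3$.

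In each configuration I would choose $S$ as a union of whole child subtrees plus possibly $u$ and a few vertices, adjusting sizes to exactly $6$ so that the remainder stays connected. Sizes are adjusted by removing pendant pieces, which keeps $T-S$ a tree. Isolated vertices of $P_1$ give slack here: any single leaf can serve as the $P_1$.

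The main obstacle is this case analysis. A subtree $T_u$ of size $6$ to $11$ may not split into a removable six-set whose complement is connected and whose induced graph contains $P_3+P_2+P_1$. An example is a child subtree that is a star $K_{1,3}$, which contains no $P_2+P_2$ disjoint from a $P_3$ within four vertices.

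I would first try to handle each child-subtree type by pairing it with vertices of the sibling subtree or of the path toward the root. If a case resists, I would fall back on mixing the reductions by two and by six, using the reduction by two only when $n\not\equiv 4\pmod 6$. This requires a second lemma: every subcubic tree has a pendant $P_2$ (a leaf and its neighbor of degree $2$) or a vertex with two leaf children. In the latter case, a $P_3$ can be removed in the appropriate residue class together with an isolated vertex elsewhere.
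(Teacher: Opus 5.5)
Your architecture matches the paper's: induction via $F_n=F_{n-6}+P_3+P_2+P_1$, a six-vertex set $S$ with $T-S$ connected, and a search near the bottom of $T$ rooted at a leaf. However, the proposal stops at the decomposition lemma, which is the whole combinatorial content of the theorem. That lemma is stated but not proved. You even suggest it may fail, and your fallback (choosing two-vertex or $P_3+P_1$ steps by residue class) is neither worked out nor complete. For example, take the caterpillar with spine $s_1s_2s_3$ in which every spine vertex has degree three, so $n=8\equiv2\pmod 6$. No adjacent pair can be removed leaving a tree, and the $P_3+P_1$ step is not allowed in that residue class.

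Your choice of $u$ also does not give what you claim. The lowest ancestor $u$ of a deepest vertex $v$ with $|V(T_u)|\ge6$ only guarantees that the child on the path to $v$ has a small subtree. Consider a leaf root adjacent to $u$, where $u$ has one child subtree that is a path on five vertices ending at $v$, and another that is a complete binary tree on $31$ vertices of the same height. Here the second child subtree is large. You need $u$ to be a vertex of maximum depth among all vertices with $|V(T_u)|\ge6$. Then $u$ is not the root, because the root's unique child $c$ has $|V(T_c)|=n-1\ge7$. So $u$ has at most two children, each with at most five descendants, and $|V(T_u)|=6$ if it has only one child.

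With that choice the case analysis closes, and your $K_{1,3}$ worry is not an obstacle. No four vertices can carry $P_3+P_2$ anyway, so $S$ is never drawn from a single small child subtree. Moreover, a hanging $K_{1,3}$ must be rooted at a leaf of the star, since otherwise its center would have degree four; deleting that root leaves a $P_3$. The one extra ingredient is that every subcubic tree on at least five vertices contains $P_3+P_2$: prune leaves down to five vertices, leaving $P_5$ or $K_{1,3}$ with one edge subdivided. The cases are then as follows.
\begin{itemize}
\item If $|V(T_u)|=6$, take $S=V(T_u)$.
\item If some child subtree $T_y$ has five vertices, take $V(T_y)$ together with a leaf of the tree $T-V(T_y)$.
\item Otherwise the child subtrees, rooted at $x$ and $y$, have orders $(2,4)$, $(3,3)$, $(3,4)$ or $(4,4)$.
  \begin{itemize}
  \item For $(2,4)$ and $(3,3)$, take both subtrees.
  \item For $(3,4)$ with $T_x$ of order three, take $V(T_x)\cup(V(T_y)\setminus\{y\})$. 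A four-vertex subtree minus its root still has an edge.
  \item For $(4,4)$, delete both roots if one of them has a single child. Otherwise take $V(T_x)$ together with the two-vertex subtree below $y$.
  \end{itemize}
\end{itemize}
Each such $S$ is either a union of complete descendant subtrees avoiding the root, or a pendant subtree plus a leaf of the rest. Hence $T-S$ is a nonempty tree.

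This is the paper's Lemma~\ref{lem:six-vertex-pruning}, except that the paper uses threshold five instead of six. Then $|V(T_v)|\in\{5,\dots,9\}$, and your five-vertex-child case is absorbed into the case $|V(T_v)|=5$. Also, there are four subcubic trees of order six, not five, and the base case $n=6$ follows directly from the five-vertex observation.
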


The proof is given in Appendix~\ref{app:induction}.
The theorem also gives the following result for cubic multigraphs.

\begin{corollary}\label{cor:cubic}
Every connected cubic multigraph on $n$ vertices, allowing loops
and parallel edges, contains a spanning subgraph isomorphic to $F_n$.
\end{corollary}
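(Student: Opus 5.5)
The plan is to reduce Corollary~\ref{cor:cubic} to Theorem~\ref{thm:universal} by finding a spanning tree of maximum degree at most three. Let $K$ be a connected cubic multigraph on $n$ vertices, with loops and parallel edges allowed.

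First, $n$ is even and $n\ge2$. Indeed, $3n=2|E(K)|$ by the handshake count, in which loops contribute two. The degenerate case $n=0$ is excluded since $K$ has vertices.

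Next, I will extract a spanning tree $T$ of $K$. This exists because $K$ is connected: take a spanning tree of the underlying simple graph, obtained by discarding loops and collapsing parallel edges. Any such tree uses no loops and at most one edge from each parallel class. Hence $T$ is a simple tree on $n$ vertices, and it is a subgraph of $K$. Moreover $\deg_T(v)\le\deg_K(v)=3$ for every vertex $v$, so $T$ has maximum degree at most three.

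Finally, Theorem~\ref{thm:universal} applies to $T$, since $T$ has even order $n\ge2$ and maximum degree at most three. So $T$ contains a spanning subgraph isomorphic to $F_n$. That subgraph is also a spanning subgraph of $K$, because $T$ spans $K$. Since $F_n$ is simple, no issue with multiplicities arises.

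There is no real obstacle here; all the difficulty lies in Theorem~\ref{thm:universal}, which is proved in Appendix~\ref{app:induction}. The only points needing care are the parity of $n$ and the remark that a spanning tree never uses loops or repeated parallel edges.
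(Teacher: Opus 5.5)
Your proposal is correct and matches the paper's proof: both take a spanning tree of the simple spanning subgraph obtained by deleting loops and keeping one edge from each parallel class, observe that it is a tree of maximum degree at most three, and then apply Theorem~\ref{thm:universal}. Like the paper, you also use the handshake count $3n=2|E(K)|$ to get that $n$ is even and at least two.
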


\begin{proof}
Let $K$ be such a multigraph. Since $3n=2|E(K)|$, the number $n$ is even
and hence at least two. Deleting loops and retaining only one edge from
each parallel class gives a connected simple spanning subgraph of $K$
with maximum degree at most three.
Choose a spanning tree $T$ of this subgraph.
By Theorem~\ref{thm:universal}, $T$ contains a spanning subgraph
isomorphic to $F_n$. Since $T$ is also a spanning subgraph of $K$,
the corollary follows.
\end{proof}

Notice that the existence of $F_n$ for simple cubic graphs, and more generally
for connected loopless cubic multigraphs, can also be deduced from
Kaneko's path-factor theorem~\cite[Theorem~1 and
Corollaries~2 and~4]{Kaneko2003} after a component-count argument.
To the best of our knowledge, the existing path-factor results do
not cover cubic multigraphs with loops, as required here.

In the uniform cubic case, Corollary~\ref{cor:cubic} gives spanning forests
of $K_G$ and $K_H$ that are both isomorphic to $F_b$.
As observed above, any isomorphism between these forests satisfies
the hypotheses of Lemma~\ref{lem:lift}, and each forest has
$b/2=2b-q$ edges.

\subsection{Proofs of Theorem~\ref{thm:main} (existence)
and Corollary~\ref{cor:diameter} (upper bound)}
\label{subsec:upper-bound-completion}


\begin{proof}[Proof of Theorem~\ref{thm:main}, existence]
For $h\ge2$, Proposition~\ref{prop:multiple} gives spanning forests satisfying
the hypotheses of Lemma~\ref{lem:lift} with at least $2b-q$ edges.
For $h=1$, Lemma~\ref{lem:reduction} covers all cases except the uniform
cubic case, which has been resolved above.

Thus, in every case, there exist spanning forests of $K_G$ and $K_H$
satisfying the hypotheses of Lemma~\ref{lem:lift}, each with
$a\ge\max\{0,2b-q\}$ edges.
Applying that lemma, we obtain isomorphic spanning forests of $G$ and $H$
with $c=b-a$ components. Since $r=q-b+1$, we have
\[
c=b-a\le q-b=r-1.
\]
This proves the existence part of Theorem~\ref{thm:main}.
\end{proof}

\begin{proof}[Proof of Corollary~\ref{cor:diameter}, upper bound]
Let $n$ and $m$ denote the common numbers of vertices and edges of
$G$ and $H$, so that $m=n+r-1$.
The existence part of Theorem~\ref{thm:main} gives isomorphic spanning
forests with $c\le r-1$ components and hence $n-c$ edges.
An isomorphism between these forests gives a bijection between the full
vertex sets that preserves at least $n-c$ edges. Therefore,
\begin{equation}\label{eq:forestmetric}
\de(G,H)\le 2\bigl(m-(n-c)\bigr)
          =2(r-1+c)\le4(r-1).
\end{equation}
This proves the upper bound in Corollary~\ref{cor:diameter}.
\end{proof}

%% file: sections/04_sharpness.tex
\section{Sharpness}\label{sec:sharpness}

In this section, we establish the matching lower bound in
Corollary~\ref{cor:diameter} by constructing, for every $r\ge2$,
connected simple graphs $G,H$ with common cyclomatic number $r$
such that $G\equiv_{\wl}H$ and $\de(G,H)=4(r-1)$.
The same examples also show that the component bound in
Theorem~\ref{thm:main} is sharp.

\subsection{The case $r\ge3$}
\label{subsec:sharpness-ge3}

Set $s:=r-1\ge2$, and take all indices modulo $s$.
Let
\[
V_s:=\{a_i,b_i,x_i,y_i,z_i,p_i,q_i,j_i,u_i,v_i:
0\le i<s\},
\]
and define
\begin{equation}\label{eq:common-edges}
E_0:=\bigcup_{i=0}^{s-1}
\{a_ix_i,\ x_iy_i,\ y_iz_i,\ b_ip_i,\ p_iq_i,
  a_ij_i,\ j_ib_i,\ p_iu_i,\ q_iv_i\}.
\end{equation}
Define graphs $G_s$ and $H_s$ on $V_s$ by
\begin{equation}\label{eq:lowerpattern}
\begin{aligned}
E(G_s)&=E_0\cup\{z_ia_i,\ q_ib_{i+1}:0\le i<s\},\\
E(H_s)&=E_0\cup\{z_ia_{i+1},\ q_ib_i:0\le i<s\}.
\end{aligned}
\end{equation}
Figure~\ref{fig:sharpness-construction} illustrates the case $s=2$.

\begin{figure}[htbp]
\centering
\includegraphics[width=0.80\textwidth]
{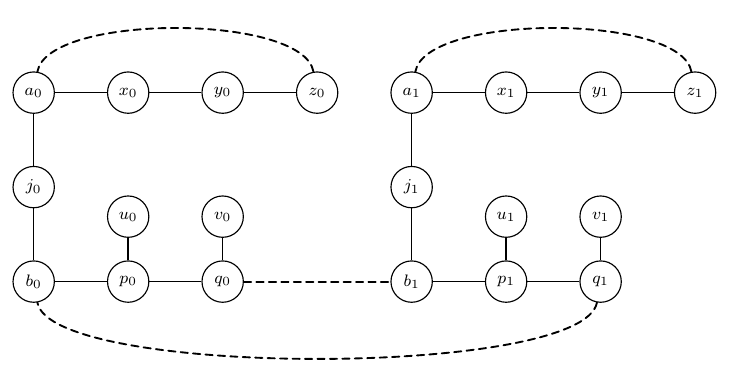}

\smallskip
$G_2$

\medskip
\includegraphics[width=0.80\textwidth]
{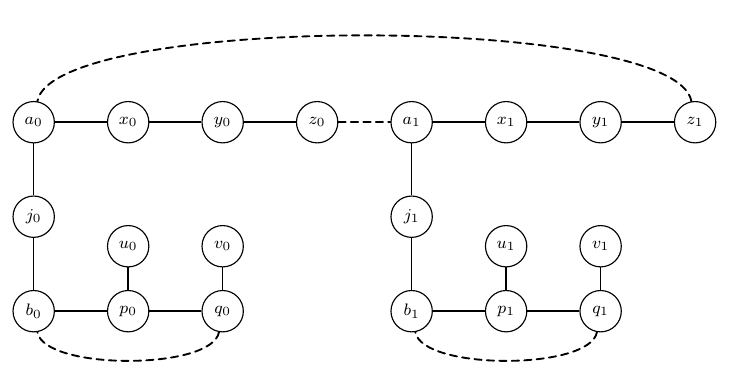}

\smallskip
$H_2$

\caption{The graphs $G_2$ and $H_2$.
Solid edges belong to $E_0$, while dashed edges
are the additional edges defining each graph.}
\label{fig:sharpness-construction}
\end{figure}

By construction, $G_s$ and $H_s$ are connected simple
cactus graphs of maximum degree three.
Both have $10s$ vertices and $11s$ edges, so their
common cyclomatic number is
\(
11s-10s+1=s+1=r.
\)

We first verify that these two graphs are indistinguishable by
uniformly initialized color refinement.

\begin{lemma}\label{lem:lowerwl}
$G_s\equiv_{\wl}H_s$.
\end{lemma}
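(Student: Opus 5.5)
The plan is to exhibit a partition of $V_s$ that is equitable in both $G_s$ and $H_s$ with identical parameters, and then show by induction on the rounds that color refinement on $G_s\sqcup H_s$ never separates vertices of the same part. The natural partition is by letter: let
\[
\mathcal P=\{A,B,X,Y,Z,P,Q,J,U,V\},\qquad A=\{a_i:0\le i<s\},\ B=\{b_i:0\le i<s\},\ \ldots,
\]
so that each class has exactly $s$ vertices in each graph. For a vertex $w$, write $L(w)\in\mathcal P$ for its class. We regard the same $\mathcal P$ as a partition of $V(G_s)$ and of $V(H_s)$.

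The first step is to tabulate the multiset of neighbor classes of each vertex in both graphs, reading indices modulo $s$.

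In $E_0$ alone:
\begin{itemize}
\item $a_i$ sees $X,J$; $x_i$ sees $A,Y$; $y_i$ sees $X,Z$; $z_i$ sees $Y$;
\item $b_i$ sees $P,J$; $p_i$ sees $B,Q,U$; $q_i$ sees $P,V$;
\item $j_i$ sees $A,B$; $u_i$ sees $P$; $v_i$ sees $Q$.
\end{itemize}

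The extra edges of $G_s$ are $z_ia_i$ and $q_ib_{i+1}$. In $G_s$ they add $A$ to $z_i$, $Z$ to $a_i$, $B$ to $q_i$, and $Q$ to $b_{i+1}$.

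The extra edges of $H_s$ are $z_ia_{i+1}$ and $q_ib_i$. In $H_s$ they add $A$ to $z_i$, $Z$ to $a_{i+1}$, $B$ to $q_i$, and $Q$ to $b_i$.

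Since $i\mapsto i+1$ is a bijection modulo $s$, every $a_i$ gains exactly one $Z$-neighbor and every $b_i$ gains exactly one $Q$-neighbor in both graphs. Hence the multiset $\{\!\{L(w'):w'\in N(w)\}\!\}$ is a function $\mu(L(w))$ of the class alone. Moreover, this function $\mu$ is the same for $G_s$ and $H_s$. This confirms that $\mathcal P$ is equitable in both graphs with the same quotient data.

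The second step is the induction. Let $\chi_t$ denote the round-$t$ color on $G_s\sqcup H_s$. The claim is that there are functions $c_t$ on $\mathcal P$ such that $\chi_t(w)=c_t(L(w))$ for every vertex $w$ of either graph. This holds at $t=0$ because the initial coloring is uniform. For the step, the new color of $w$ is the pair $\bigl(\chi_t(w),\{\!\{\chi_t(w'):w'\in N(w)\}\!\}\bigr)$. By the induction hypothesis this pair equals $\bigl(c_t(L(w)),\{\!\{c_t(K):K\in\mu(L(w))\}\!\}\bigr)$, which depends only on $L(w)$. This defines $c_{t+1}$.

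Consequently, at every round each color class in $G_s$ is a union of letter classes. The same union of letter classes forms the corresponding color class in $H_s$. As every letter class has size $s$ in both graphs, each color occurs equally often in $G_s$ and $H_s$ at every round, which is exactly $G_s\equiv_{\wl}H_s$.

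I expect no real obstacle. The only care needed is the index bookkeeping modulo $s$ in the first step, namely checking that the shifted edges $z_ia_{i+1}$ and $q_ib_{i+1}$ still give each $a_i$ and each $b_i$ exactly one neighbor in $Z$ and $Q$ respectively. Since these are the only edges that differ between the two graphs, this is the single point to verify carefully.
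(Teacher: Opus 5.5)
Your proposal is correct and follows the paper's approach: exhibit a vertex partition that is equitable in both $G_s$ and $H_s$ with identical class sizes and neighbor counts. The differences are minor. You use the finer ten-letter partition, whereas the paper merges $x_i,z_i$, then $p_i,q_i$, and then $u_i,v_i$ into seven classes. You also prove by a direct induction on rounds that a common equitable partition forces $\equiv_{\wl}$, where the paper cites \cite[Theorem~2.2]{Ramana1994}.
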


\begin{proof}
Consider the partition of $V_s$ into the seven vertex
classes listed below, where $0\le i<s$.
The last column gives the multiset of neighboring classes
in either graph.
\begin{center}
\begin{tabular}{@{}clcc@{}}
\toprule
Class & Vertices & Size & Neighbor classes\\
\midrule
$A$ & $a_i$       & $s$  & $X,X,J$\\
$B$ & $b_i$       & $s$  & $P,P,J$\\
$X$ & $x_i,z_i$   & $2s$ & $A,Y$\\
$Y$ & $y_i$       & $s$  & $X,X$\\
$P$ & $p_i,q_i$   & $2s$ & $B,P,L$\\
$J$ & $j_i$       & $s$  & $A,B$\\
$L$ & $u_i,v_i$   & $2s$ & $P$\\
\bottomrule
\end{tabular}
\end{center}
The partition is equitable in both graphs, with identical
class sizes and neighbor counts. Hence
$G_s\equiv_{\wl}H_s$ by
\cite[Theorem~2.2]{Ramana1994}.
\end{proof}

We now determine the unlabeled edge symmetric-difference distance
between $G_s$ and $H_s$.

\begin{proposition}\label{prop:lower}
For every $s\ge2$,
\(
\de(G_s,H_s)=4s.
\)
\end{proposition}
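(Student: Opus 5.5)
The upper bound $\de(G_s,H_s)\le 4s$ is the upper bound of Corollary~\ref{cor:diameter}, applied with $r=s+1$; this is legitimate because $G_s\equiv_{\wl}H_s$ by Lemma~\ref{lem:lowerwl}. It therefore remains to show that for every bijection $\pi:V_s\to V_s$, at least $2s$ edges of $G_s$ fail to map to edges of $H_s$. Both graphs have $11s$ edges, so $\de$ equals $2(11s-|S|)$, where $S\subseteq E(G_s)$ is the set of edges preserved by $\pi$. Thus I must show $|S|\le 9s$.

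\textbf{Step 1: $S$ is a forest.} Since both graphs are cacti, their cycles are exactly their blocks.
\begin{itemize}
\item In $G_s$, these are the $s$ four-cycles $a_ix_iy_iz_i$ and the single cycle $b_0p_0q_0b_1p_1q_1\cdots$ of length $3s$.
\item In $H_s$, these are the $s$ triangles $b_ip_iq_i$ and the single cycle through all $a_i,x_i,y_i,z_i$ of length $4s$.
\end{itemize}
A cycle contained in $S$ would map to a cycle of $H_s$ of the same length. Since $\{4,3s\}\cap\{3,4s\}=\emptyset$ for $s\ge2$, no such cycle exists, so $S$ is a forest. If $k:=11s-|S|$ edges are lost, then $S$ has $k-s$ components, and the goal becomes $k\ge 2s$. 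Cycle-breaking alone gives only $k\ge s+1$: $s$ edges from the disjoint four-cycles and one from the long cycle. Similarly, $\pi(S)$ must miss at least one edge of each triangle of $H_s$.

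\textbf{Step 2: compare degree statistics.} The forests $S$ and $\pi(S)$ are isomorphic, so they have the same number of vertices of each degree. On the $H_s$ side, each of the $s$ disjoint triangles $b_ip_iq_i$ loses an edge. Every triangle edge joins two of the three degree-$3$ vertices $b_i,p_i,q_i$. Hence each such loss removes two degree-$3$ vertices, and $\pi(S)$ has at most $4s-2s=2s$ vertices of degree $3$.

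On the $G_s$ side, a lost four-cycle edge touches at most one degree-$3$ vertex, namely $a_i$, while any other lost edge touches at most two. This gives at least $4s-s-2(k-s)$ degree-$3$ vertices in $S$, and hence $k\ge 3s/2$. That is not yet enough. I plan to sharpen the count by also tracking the number of leaves, or, more finely, degree-$3$ vertices adjacent to leaves. The $2s$ pendant vertices $u_i,v_i$ hang off $p_i,q_i$, which lie on the long cycle in $G_s$ but in triangles in $H_s$. The edges $a_ij_i,j_ib_i$ are bridges of degree pattern $(3,2,3)$. I would set up a weighted invariant, for example $\#\{\deg 3\}-\#\{\deg 1\}$ or the number of maximal induced paths between degree-$3$ vertices. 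The aim is that each lost edge in $G_s$ changes the invariant by at most a bounded amount, while the forced triangle losses in $H_s$ change it by a strictly larger amount. That discrepancy would yield $k\ge 2s$.

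\textbf{Main obstacle.} Finding the right invariant in Step 2 is where I expect the difficulty. The coarse degree-$3$ count falls short by a factor of $3/2$, so one needs a statistic that also sees the local structure around $p_i,q_i$ and their pendant leaves. If a clean invariant does not emerge, my fallback is a direct local argument. I would show that for each $i$, the block formed by the triangle $b_ip_iq_i$ and its pendants in $H_s$ cannot be the image of a subtree of $S$ unless two edges of $G_s$ are lost in a region charged to index $i$. I would then check that these charging regions are disjoint across $i$, which gives $k\ge 2s$.
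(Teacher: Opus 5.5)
There is a genuine gap. The inequality $k\ge 2s$, which is the entire content of the lower bound, is never established. Step~1 gives $k\ge s+1$ and Step~2 gives $k\ge 3s/2$. Beyond that you offer only a plan (an unspecified weighted invariant, or a charging scheme based on the triangles of $H_s$) and do not carry it out. The global count of degree-$3$ vertices stalls at $3s/2$ for an intrinsic reason: a single lost edge such as $p_iq_i$ really can remove two degree-$3$ vertices at once. What you need instead is a local obstruction. For each $i$ it must force a lost edge in a region that is edge-disjoint from the four-cycles and from the regions of the other indices.

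The missing idea, which is how the paper argues, is as follows. For each $i$, the five edges $p_iq_i$, $p_ib_i$, $p_iu_i$, $q_ib_{i+1}$, $q_iv_i$ of $G_s$ form a double star. It has two adjacent centers $p_i,q_i$, each with two further neighbors, and these four neighbors are distinct since $s\ge2$. These $s$ double stars are pairwise edge-disjoint and edge-disjoint from the four-cycles $a_ix_iy_iz_i$.

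The graph $H_s$ contains no such double star. In a graph of maximum degree three, its central edge would join two degree-$3$ vertices with no common neighbor. But every edge of $H_s$ joining two degree-$3$ vertices lies in a triangle $b_ip_iq_i$. In the language of your Step~2: two adjacent degree-$3$ vertices in $\pi(S)$ would force an entire triangle into $\pi(S)$. So $S$ has no such pair either, and hence for every $i$ some edge at $p_i$ or $q_i$ is lost.

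Since $H_s$ also has no $4$-cycle, every bijection loses at least one edge from each of these $2s$ edge-disjoint subgraphs of $G_s$. This gives $k\ge 2s$ directly, and the forest structure from Step~1 and the global degree count become unnecessary.

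Finally, deriving the upper bound from Corollary~\ref{cor:diameter} is valid but relies on the full existence machinery. The identity bijection already gives $|E(G_s)\mathbin{\triangle}E(H_s)|=4s$, since each graph has exactly $2s$ edges outside $E_0$.
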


\begin{proof}
Let $S_{2,2}$ denote the tree consisting of two adjacent
vertices of degree three, each joined to two leaves.
For each $0\le i<s$, let $Q_i$ and $T_i$ be subgraphs
of $G_s$ with edge sets
\[
\begin{aligned}
E(Q_i)&=\{a_ix_i,\ x_iy_i,\ y_iz_i,\ z_ia_i\},\\
E(T_i)&=\{p_iq_i,\ p_ib_i,\ p_iu_i,\ q_ib_{i+1},\ q_iv_i\}.
\end{aligned}
\]
Then $Q_i$ is a $4$-cycle and $T_i$ is isomorphic to
$S_{2,2}$. These $2s$ subgraphs are pairwise edge-disjoint.

We show that $H_s$ contains neither a $4$-cycle
nor a copy of $S_{2,2}$.
The only cycles in $H_s$ have lengths $3$ and $4s$.
Since $s\ge2$, $H_s$ contains no $4$-cycle.
Moreover, in a graph of maximum degree at most three,
the central edge of a copy of $S_{2,2}$ must join two
degree-three vertices with no common neighbor.
Every edge of $H_s$ joining two degree-three vertices
belongs to one of the triangles $b_i-p_i-q_i-b_i$,
whose vertices are pairwise adjacent.
Thus $H_s$ contains no copy of $S_{2,2}$.

Let $\pi:V(G_s)\to V(H_s)$ be any bijection.
At least one edge of each $Q_i$ and $T_i$ is not
preserved under $\pi$.
As these $2s$ subgraphs are pairwise edge-disjoint,
at least $2s$ edges are lost.
The two graphs have the same number of edges, so
\(
|E(H_s)\mathbin{\triangle}\pi(E(G_s))|\ge4s.
\)
Taking the minimum over $\pi$ gives
$\de(G_s,H_s)\ge4s$.

For the reverse inequality, the identity bijection
preserves exactly the edges of $E_0$.
By \eqref{eq:lowerpattern}, each graph has exactly
$2s$ edges outside $E_0$, so
\(
\de(G_s,H_s)
\le |E(G_s)\mathbin{\triangle}E(H_s)|=4s.
\)
Hence $\de(G_s,H_s)=4s$.
\end{proof}

\subsection{The case $r=2$}
\label{subsec:sharpness-r2}

For $r=2$, we use the six-vertex pair shown in
Figure~\ref{fig:sharpness-r2}, which also appears, up to vertex
labels, in Bodnar et al.~\cite[supplementary material,
Figure~8 (left)]{Bodnar2021}.
The graphs $G$ and $H$ have the common vertex set
$\{0,1,2,3,4,5\}$.

\begin{figure}[h]
\centering
\includegraphics[width=0.80\textwidth]{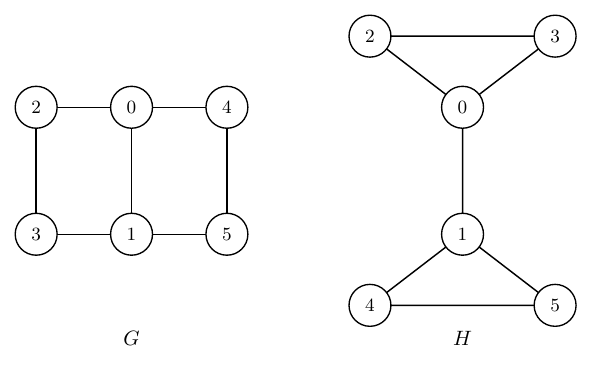}
\caption{The graphs $G$ and $H$ used for $r=2$.
The graph $G$ consists of two $4$-cycles sharing the edge $01$,
whereas $H$ consists of two triangles joined by the edge $01$.}
\label{fig:sharpness-r2}
\end{figure}

\begin{proposition}\label{prop:lower-r2}
The graphs $G$ and $H$ in Figure~\ref{fig:sharpness-r2}
are connected, have maximum degree three and cyclomatic
number $2$, satisfy $G\equiv_{\wl}H$, and have
$\de(G,H)=4$.
\end{proposition}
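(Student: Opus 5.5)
We prove Proposition~\ref{prop:lower-r2} by fixing explicit labels consistent with Figure~\ref{fig:sharpness-r2} and checking each claimed property directly. We take
\[
E(G)=\{01,\,02,\,24,\,41,\,03,\,35,\,51\},\qquad
E(H)=\{01,\,02,\,24,\,40,\,13,\,35,\,51\}.
\]
Thus $G$ is the two $4$-cycles $0\,2\,4\,1$ and $0\,3\,5\,1$ sharing the edge $01$. The graph $H$ is the two triangles $0\,2\,4$ and $1\,3\,5$ joined by the edge $01$.

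The elementary properties are immediate. Both graphs are connected on $6$ vertices with $7$ edges, so the cyclomatic number is $7-6+1=2$. In each graph, vertices $0$ and $1$ have degree three and the other four vertices have degree two, so the maximum degree is three.

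For $G\equiv_{\wl}H$ we use the same argument as Lemma~\ref{lem:lowerwl}, with the partition $A=\{0,1\}$, $D=\{2,3,4,5\}$.
\begin{itemize}
\item In $G$, each vertex of $A$ has neighbors $A,D,D$, and each vertex of $D$ has neighbors $A,D$. For instance, $2$ is adjacent to $0$ and $4$, and $4$ is adjacent to $2$ and $1$.
\item In $H$, each vertex of $A$ has neighbors $A,D,D$, and each vertex of $D$ has neighbors $A,D$. For instance, $2$ is adjacent to $0$ and $4$, and $4$ is adjacent to $2$ and $0$.
\end{itemize}
Class sizes and neighbor counts agree, so this is a common equitable partition, and \cite[Theorem~2.2]{Ramana1994} gives $G\equiv_{\wl}H$.

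For $\de(G,H)=4$, the upper bound follows from \eqref{eq:forestmetric} with $r=2$. Alternatively, the identity bijection has symmetric difference $\{41,03\}\triangle\{40,13\}$, which has size $4$.

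For the lower bound, suppose $\de(G,H)<4$. Both graphs have $7$ edges, so the symmetric difference is even and at most $2$, so it would equal $0$ or $2$. Value $0$ is impossible since $H$ has a triangle and $G$ is bipartite, so $G\not\cong H$. Value $2$ would mean some bijection preserves $6$ edges. Then deleting a single edge of $G$ would produce a graph isomorphic to $H$ minus a single edge. We rule this out by counting $4$-cycles and triangles:
\begin{itemize}
\item $G$ contains two $4$-cycles, $0241$ and $0351$, which share only the edge $01$. Deleting one edge of $G$ leaves at least one $4$-cycle.
\item $H$ contains no $4$-cycle, and deleting an edge cannot create one.
\end{itemize}
So no bijection preserves $6$ edges, and $\de(G,H)\ge4$. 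This mirrors the edge-disjoint obstruction argument of Proposition~\ref{prop:lower}, here using the single forbidden subgraph $C_4$.

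The main point requiring care is fixing labels that genuinely match the figure; any labeling fitting its description works, since the argument uses only the two structural descriptions.
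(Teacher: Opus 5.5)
Your lower-bound argument contains a false step. The two $4$-cycles $0241$ and $0351$ of $G$ share the edge $01$, so deleting $01$ destroys both of them. In fact $G-01$ is the $6$-cycle $0\,2\,4\,1\,5\,3\,0$, and the only cycles of $G$ are these two $4$-cycles and this $6$-cycle. Thus the claim that deleting one edge of $G$ leaves a $4$-cycle fails exactly for the shared edge. Since no $H-e'$ contains a $4$-cycle either, counting $4$-cycles cannot exclude a bijection $\pi$ that preserves six edges and whose unique non-preserved edge of $G$ is $01$. The analogy with Proposition~\ref{prop:lower} breaks down because that argument needs pairwise \emph{edge-disjoint} obstructions, and your two $4$-cycles are not edge-disjoint.

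The repair is to put the edge-disjoint obstructions in $H$, which is what the paper does. With your labels, $G$ is bipartite with parts $\{0,4,5\}$ and $\{1,2,3\}$, so $\pi(G)$ is triangle-free for every bijection $\pi$. Meanwhile $H$ contains the edge-disjoint triangles $024$ and $135$. Hence $\pi$ fails to preserve at least one edge of each triangle, so at least two edges of $H$ are lost. As both graphs have seven edges, this gives $\de(G,H)\ge4$ directly, and your parity reduction becomes unnecessary. You announced a count of ``$4$-cycles and triangles'' but never used the triangles; with them the proof closes.

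The rest of your proof agrees with the paper's:
\begin{itemize}
\item the explicit labelling, which is harmless because every claimed property is invariant under relabelling $G$ and $H$ separately;
\item the common equitable partition $\{\{0,1\},\{2,3,4,5\}\}$ together with \cite[Theorem~2.2]{Ramana1994};
\item the identity bijection of cost $4$.
\end{itemize}
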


\begin{proof}
Both graphs are connected, have maximum degree three 
and have six vertices and seven edges,
so their cyclomatic number is $7-6+1=2$.
In each graph, the partition
$\{\{0,1\},\{2,3,4,5\}\}$ is equitable: every vertex in the
first class has one neighbor in that class and two in the second,
while every vertex in the second class has one neighbor in each
class. Since the class sizes and neighbor counts agree in both
graphs, $G\equiv_{\wl}H$ under uniform initialization by
\cite[Theorem~2.2]{Ramana1994}.

The graph $G$ is bipartite, with parts $\{0,3,5\}$ and
$\{1,2,4\}$, whereas $H$ contains the edge-disjoint triangles
$023$ and $145$.
Under any vertex bijection $\pi:V(G)\to V(H)$, the graph
$\pi(G)$ is bipartite, so it omits at least one edge of each
triangle in $H$. Since the two triangles are edge-disjoint and
$G,H$ have the same number of edges, this gives
$\de(G,H)\ge4$.
On the other hand, the identity bijection preserves exactly
the five edges of the spanning path $3-2-0-1-5-4$,
and therefore has symmetric-difference cost four.
Hence $\de(G,H)=4$.
\end{proof}

\subsection{Proofs of Corollary~\ref{cor:diameter} (lower bound)
and Theorem~\ref{thm:main} (sharpness)}
\label{subsec:sharpness-completion}

\begin{proof}[Proof of Corollary~\ref{cor:diameter}, lower bound]
For every $r\ge3$, the construction in
Subsection~\ref{subsec:sharpness-ge3}, with $s=r-1$,
gives two connected simple graphs with common cyclomatic number $r$
that are equivalent under color refinement by
Lemma~\ref{lem:lowerwl} and have distance $4(r-1)$ by
Proposition~\ref{prop:lower}.
For $r=2$, Proposition~\ref{prop:lower-r2} gives the same
distance equality. Together with the upper bound
\eqref{eq:forestmetric}, these examples show that
$\Dcr(r)=4(r-1)$ for every $r\ge2$.
\end{proof}

\begin{proof}[Proof of Theorem~\ref{thm:main}, sharpness]
Fix $r\ge2$ and choose one of the equality pairs $G,H$
constructed above. Both graphs have maximum degree three.
Let $n$ and $m$ denote their common order and size, so that
$m=n+r-1$.
Suppose they have a common spanning forest with $c$ components.
Its isomorphism preserves at least $n-c$ edges, and hence
\[
4(r-1)=\de(G,H)\le 2\bigl(m-(n-c)\bigr)
=2(r-1+c).
\]
Therefore $c\ge r-1$. The existence part of
Theorem~\ref{thm:main} gives a common spanning forest
with at most $r-1$ components, so the component bound is attained.
\end{proof}

Together with the existence and upper-bound proofs in
Subsection~\ref{subsec:upper-bound-completion}, this completes
the proofs of Theorem~\ref{thm:main} and
Corollary~\ref{cor:diameter}.

%% file: appendices/A_subcubic_tree_induction.tex
\section{Proof of Theorem~\ref{thm:universal}}
\label{app:induction}

For every even integer $n\ge8$, the definition of $F_n$ in \eqref{eq:Fn} gives
\begin{equation}\label{eq:Fn-recursion}
F_n=F_{n-6}+P_3+P_2+P_1.
\end{equation}
We prove Theorem~\ref{thm:universal} by induction on the order
of the tree, using the following six-vertex pruning lemma.

\begin{lemma}\label{lem:six-vertex-pruning}
Let $T$ be a subcubic tree on at least six vertices.
There is a set $S\subseteq V(T)$ of six vertices such that the induced subgraph
$T[S]$ contains vertex-disjoint copies of $P_3$ and $P_2$,
and $T-S$ is either empty or connected.
\end{lemma}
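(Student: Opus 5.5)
The plan is to pick $S$ inside a smallest ``heavy'' pendant subtree, so that $T-S$ stays connected automatically, and then settle its shape by a finite case analysis. If $|V(T)|=6$, I take $S=V(T)$. It then suffices to show that every subcubic tree on six vertices contains disjoint copies of $P_3$ and $P_2$.

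For this, take a longest path $w_0w_1\cdots w_\ell$ in the tree. If $\ell\ge4$, the vertices $w_0w_1w_2$ and $w_3w_4$ give the two paths. If $\ell\le3$, the tree has diameter at most three. With maximum degree three and six vertices, it must be the double star $S_{2,2}$. There, one centre with its two leaves gives $P_3$, and the other centre with one leaf gives $P_2$.

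Assume now $|V(T)|\ge7$. I root $T$ at a leaf $\rho$, so every nonroot vertex has at most two children. For a vertex $x$, let $D_x$ be the set of descendants of $x$, including $x$. I choose $v$ to be a vertex with $|D_v|\ge6$ of maximum depth. Such a $v$ exists because $|D_\rho|\ge7$.

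Every child $c$ of $v$ then has $|D_c|\le5$. If $v\ne\rho$, then $v$ has at most two children, so $6\le|D_v|\le11$. If $v=\rho$, its unique child $c$ has $|D_c|\le 5$, so $|D_\rho|\le 6$, contradicting $|V(T)|\ge7$; hence $v\ne\rho$.

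The key structural fact is this. If $S\subseteq D_v$ is chosen so that $D_v\setminus S$ is either empty or a subtree containing $v$, then $T-S$ is connected. It consists of $T-D_v$, which is connected and contains the parent of $v$, together with a connected piece hanging from $v$, or nothing. Similarly, if $S$ is a union of whole pendant subtrees $D_c$ plus a set whose removal leaves $v$ attached, connectivity is preserved.

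I then split by the sizes $\alpha\ge\beta$ of the child subtrees of $v$, where $\beta=0$ if $v$ has one child. Here $\alpha\le5$.

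\emph{Case $|D_v|=6$.} Take $S=D_v$ and use the six-vertex claim above.

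\emph{Case $|D_v|\ge7$.} Then $v$ has two children $c_1,c_2$ with $\alpha+\beta\ge6$. The basic move is to take $S=D_{c_1}\cup X$, where $X\subseteq D_{c_2}$ is the vertex set of a pendant piece of $D_{c_2}$. The removal of such an $X$ still leaves the remaining part of $D_{c_2}$ attached to $v$. Alternatively, when $\alpha=5$, take $S=D_{c_1}\cup\{x\}$ for a suitable extra vertex $x$, namely a leaf of $D_{c_2}$, or $v$ itself if $D_{c_2}$ can be removed together with it.

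Rooted subcubic trees with at most five vertices, in which the root has at most two children, form a short finite list: paths, ``forks'', and a few small binary shapes. For each pair $(\alpha,\beta)$ I would exhibit $S$ explicitly, and then check in each case that $T[S]$ contains vertex-disjoint $P_3$ and $P_2$. Useful pieces are: a root-to-leaf path of length two inside a child subtree of size at least three, the edge $vc_i$, or a leaf edge of $D_{c_2}$.

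The main obstacle is this case analysis, in particular the configurations where the child subtrees are small and ``bushy'' (for example $\alpha=\beta=3$ with both children being cherries). There, taking a whole child subtree does not reach six vertices by pendant pieces alone. I would handle these by putting $v$ itself into $S$ together with all of $D_v$ when $|D_v|=7$ fails, or by choosing $S=D_v\setminus\{\text{one leaf}\}$ when that leaf can instead stay attached. The danger is that such a leaf would become isolated. I expect the cleanest way to avoid this is to refine the choice of $v$ further, for instance by minimizing $|D_v|$ among vertices with $|D_v|\ge6$ rather than maximizing depth, so that $|D_v|\le 6+$ (small). This keeps the list of cases to check short.
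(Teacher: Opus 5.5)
Your setup is the same as the paper's: root $T$ at a leaf and take a deepest vertex with a large descendant set. The paper uses the threshold $5$, so $|V(T_v)|\le 9$. With your threshold $6$, the cases $|D_v|=6$ and $\alpha=5$ are easy: take $S=D_v$, respectively $S=D_{c_1}$ plus a childless vertex of $D_{c_2}$, using that every subcubic tree on five vertices contains $P_3+P_2$. Your cases with $\alpha\le4$ are exactly the paper's cases $s=7,8,9$.

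The gap is that this remaining case analysis, which is the actual content of the lemma, is not carried out, and the guidance you give for it is wrong. When $|D_v|\ge7$, the vertex $v$ can never lie in $S$. Since $v\ne\rho$, the graph $T-D_v$ is nonempty and is joined to $D_v$ only by the edge from $v$ to its parent, so $v\in S$ together with connectivity of $T-S$ forces $D_v\subseteq S$. Thus the edge $vc_i$, the choice of $v$ as the sixth vertex, and $S=D_v\setminus\{\text{a leaf}\}$ are all unavailable. Minimizing $|D_v|$ instead of maximizing the depth changes nothing: the minimizer's children again only satisfy $|D_c|\le5$, and $|D_v|=11$ can still occur. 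Finally, the case you single out, $\alpha=\beta=3$, is no obstacle at all, since $S=D_{c_1}\cup D_{c_2}$ induces $P_3+P_3$. The case $(4,2)$ is handled the same way.

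Your basic move $S=D_{c_1}\cup X$ genuinely fails in two situations. The first is $(\alpha,\beta)=(4,3)$ when $D_{c_2}$ is a cherry. The second, even with the roles of $c_1,c_2$ exchanged, is $(4,4)$ when both $D_{c_i}$ consist of $c_i$, its only child $w_i$, and two leaves below $w_i$. In both situations the four-vertex $T[D_{c_1}]$ cannot contain $P_3+P_2$, while the only removable pair in $D_{c_2}$ is two nonadjacent leaves. The missing idea is to remove everything strictly below a child root and keep that root as a connector to $v$.

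For $(4,3)$, take $S=D_{c_2}\cup(D_{c_1}\setminus\{c_1\})$. The set $D_{c_2}$ induces a $P_3$, and the three vertices below $c_1$ span an edge, since $c_1$ meets at most two of the three edges of $T[D_{c_1}]$. For $(4,4)$, if some $c_i$ has only one child, take $S=(D_{c_1}\setminus\{c_1\})\cup(D_{c_2}\setminus\{c_2\})$; that side induces a $P_3$ and the other contains an edge. Otherwise each $c_i$ has a leaf child and a child $z_i$ with $|D_{z_i}|=2$, and $S=D_{c_1}\cup D_{z_2}$ works. In every case $D_v\setminus S$ induces a subtree containing $v$, so $T-S$ is connected by your own structural observation. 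This is exactly the paper's treatment of $s=8$ and $s=9$, and with these choices your argument becomes complete.
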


\begin{proof}
We first observe that every subcubic tree on at least five vertices
contains vertex-disjoint copies of $P_3$ and $P_2$.
Indeed, delete leaves successively until five vertices remain.
The resulting tree is either $P_5$ or is obtained by subdividing
one edge of $K_{1,3}$ once. Both contain a spanning $P_3+P_2$.

Now root $T$ at a leaf. For each vertex $v$, let $T_v$ denote
the subtree consisting of $v$ and its descendants.
Choose a vertex $v$ of maximum depth subject to
$|V(T_v)|\ge5$.
Since each vertex has at most two children and each child
subtree of $v$ has at most four vertices,
\(
s:=|V(T_v)|\in\{5,6,7,8,9\}.
\)
Since the root of $T$ is a leaf, it has a unique child whose
descendant subtree contains $|V(T)|-1\ge5$ vertices.
Hence $v$ is not the root.

If $s=5$, let $R:=T-V(T_v)$.
The tree $R$ is nonempty. Choose its unique vertex if
$|V(R)|=1$, and a leaf otherwise.
Write $w$ for the chosen vertex, and take
$S:=V(T_v)\cup\{w\}$.
Then $T-S=R-w$ is either empty or connected, and the
five-vertex observation supplies the required two paths.

If $s=6$, take $S:=V(T_v)$. Its complement is nonempty and
connected, and the same observation supplies the two paths.

If $s=7$, the two child subtrees of $v$ have orders $(2,4)$ or
$(3,3)$. Let their roots be $x$ and $y$, and take
\(
S:=V(T_x)\cup V(T_y).
\)
For orders $(2,4)$, the order-four subtree contains a $P_3$,
and the order-two subtree is a $P_2$.
For orders $(3,3)$, choose a $P_3$ from one subtree and an
edge from the other.

If $s=8$, the child subtrees have orders three and four.
Let $x$ and $y$ be their respective roots, and take
\(
S:=V(T_x)\cup\bigl(V(T_y)\setminus\{y\}\bigr).
\)
The subtree $T_x$ is a $P_3$, while the three vertices of
$T_y-y$ contain an edge, since $T_y$ has three edges and
$y$ has at most two children.

Finally, suppose $s=9$. Both child subtrees have order four;
let their roots be $x$ and $y$.
If either root has only one child, assume without loss of
generality that it is $x$, and take
\(
S:=\bigl(V(T_x)\setminus\{x\}\bigr)
   \cup\bigl(V(T_y)\setminus\{y\}\bigr).
\)
The three vertices of $T_x-x$ form a $P_3$, while those of
$T_y-y$ contain an edge by the preceding argument.
Otherwise, both $x$ and $y$ have two children, with
child-subtree orders one and two.
Let $z$ be the child of $y$ whose subtree has order two,
and take
\(
S:=V(T_x)\cup V(T_z).
\)
The subtree $T_x$ contains a $P_3$, and $T_z$ is a $P_2$.

For $s=5$, we have already shown that $T-S$ is either empty
or connected. In every case with $s\ge6$, the set $S$ is a
union of complete descendant subtrees and does not contain
the original root. Thus the path from any vertex of $T-S$
to the root avoids $S$, so $T-S$ is nonempty and connected.
In all cases, the chosen set $S$ has six vertices, and $T[S]$
contains vertex-disjoint copies of $P_3$ and $P_2$.
Hence $S$ has all the required properties.
\end{proof}

\begin{proof}[Proof of Theorem~\ref{thm:universal}]
The case $n=2$ is immediate, since $F_2=P_2$.
Every tree on four vertices contains a $P_3$, and hence
contains $F_4=P_3+P_1$.
For $n=6$, Lemma~\ref{lem:six-vertex-pruning} gives
a spanning copy of $F_6=P_3+P_2+P_1$.

Let $n\ge8$ be even, and apply
Lemma~\ref{lem:six-vertex-pruning} to obtain $S$. 
The graph $T-S$ is a subcubic tree of even order
$n-6$, so by induction it contains a spanning copy of $F_{n-6}$.
On $S$, retain a $P_3$ and a disjoint $P_2$, leaving the sixth
vertex isolated. Discarding all other edges gives a spanning
forest isomorphic to $F_n$ by \eqref{eq:Fn-recursion}.
\end{proof}

%% file: CSF-1-WL.bib
@article{Hall1935,
  author = {Hall, Philip},
  title = {On Representatives of Subsets},
  journal = {Journal of the London Mathematical Society},
  volume = {s1-10},
  number = {1},
  pages = {26--30},
  year = {1935},
  doi = {10.1112/jlms/s1-10.37.26}
}

@article{Tinhofer1986,
  author = {Tinhofer, Gottfried},
  title = {Graph isomorphism and theorems of {Birkhoff} type},
  journal = {Computing},
  volume = {36},
  number = {4},
  pages = {285--300},
  year = {1986},
  doi = {10.1007/BF02240204}
}

@article{Ramana1994,
  author = {Ramana, Motakuri V. and Scheinerman, Edward R. and Ullman, Daniel},
  title = {Fractional isomorphism of graphs},
  journal = {Discrete Mathematics},
  volume = {132},
  number = {1--3},
  pages = {247--265},
  year = {1994},
  doi = {10.1016/0012-365X(94)90241-0}
}

@article{Arvind2017,
  author = {Arvind, Vikraman and K{\"o}bler, Johannes and Rattan, Gaurav and Verbitsky, Oleg},
  title = {Graph Isomorphism, Color Refinement, and Compactness},
  journal = {Computational Complexity},
  volume = {26},
  number = {3},
  pages = {627--685},
  year = {2017},
  doi = {10.1007/s00037-016-0147-6}
}

@inproceedings{Verbitsky2025,
  author = {Verbitsky, Oleg and Zhukovskii, Maksim},
  title = {Canonical Labeling of Sparse Random Graphs},
  booktitle = {42nd International Symposium on Theoretical Aspects of Computer Science (STACS 2025)},
  series = {Leibniz International Proceedings in Informatics (LIPIcs)},
  volume = {327},
  pages = {75:1--75:20},
  year = {2025},
  doi = {10.4230/LIPIcs.STACS.2025.75},
  publisher = {Schloss Dagstuhl -- Leibniz-Zentrum f{\"u}r Informatik},
  note = {Article 75}
}

@article{Kaneko2003,
  author = {Kaneko, Atsushi},
  title = {A necessary and sufficient condition for the existence of a path factor every component of which is a path of length at least two},
  journal = {Journal of Combinatorial Theory, Series B},
  volume = {88},
  number = {2},
  pages = {195--218},
  year = {2003},
  doi = {10.1016/S0095-8956(03)00027-3}
}

@inproceedings{GroheRattanWoeginger2018,
  author = {Grohe, Martin and Rattan, Gaurav and Woeginger, Gerhard J.},
  title = {Graph Similarity and Approximate Isomorphism},
  booktitle = {43rd International Symposium on Mathematical Foundations of Computer Science (MFCS 2018)},
  series = {Leibniz International Proceedings in Informatics (LIPIcs)},
  volume = {117},
  pages = {20:1--20:16},
  year = {2018},
  doi = {10.4230/LIPIcs.MFCS.2018.20},
  publisher = {Schloss Dagstuhl -- Leibniz-Zentrum f{\"u}r Informatik},
  note = {Article 20}
}

@inproceedings{Bodnar2021,
  author = {Bodnar, Cristian and Frasca, Fabrizio and Otter, Nina and Wang, Yu Guang and Li{\`o}, Pietro and Mont{\'u}far, Guido and Bronstein, Michael},
  title = {{Weisfeiler} and {Lehman} Go Cellular: {CW} Networks},
  booktitle = {Advances in Neural Information Processing Systems},
  volume = {34},
  pages = {2625--2640},
  year = {2021},
  url = {https://proceedings.neurips.cc/paper_files/paper/2021/hash/157792e4abb490f99dbd738483e0d2d4-Abstract.html},
  eprint = {2106.12575v3},
  archivePrefix = {arXiv},
  primaryClass = {cs.LG}
}

@article{RautenbachWerner2026,
  author = {Rautenbach, Dieter and Werner, Florian},
  title = {Largest Common Subgraph of Two Forests},
  journal = {Journal of Graph Algorithms and Applications},
  volume = {30},
  number = {1},
  pages = {47--63},
  year = {2026},
  doi = {10.7155/jgaa.v30i1.2967}
}

@article{LiuEtAl2026,
  author = {Liu, Jingyi and Chen, Xian and Zheng, Yicheng and Wang, Jianxin and Shi, Feng},
  title = {Parameterized algorithms for the spanning forest isomorphism and containment on tree},
  journal = {Theoretical Computer Science},
  volume = {1061},
  year = {2026},
  eid = {115652},
  note = {Article 115652},
  doi = {10.1016/j.tcs.2025.115652}
}

@article{deGastinesKnippel2024,
  author = {de Gastines, Etienne and Knippel, Arnaud},
  title = {Formulations for the maximum common edge subgraph problem},
  journal = {Discrete Applied Mathematics},
  volume = {346},
  pages = {115--130},
  year = {2024},
  doi = {10.1016/j.dam.2023.11.044}
}

@inproceedings{Boker2021,
  author = {B{\"o}ker, Jan},
  title = {Graph Similarity and Homomorphism Densities},
  booktitle = {48th International Colloquium on Automata, Languages, and Programming (ICALP 2021)},
  series = {Leibniz International Proceedings in Informatics (LIPIcs)},
  volume = {198},
  pages = {32:1--32:17},
  year = {2021},
  note = {Article 32},
  doi = {10.4230/LIPIcs.ICALP.2021.32},
  publisher = {Schloss Dagstuhl -- Leibniz-Zentrum f{\"u}r Informatik}
}

@book{AkiyamaKano2011,
  author    = {Jin Akiyama and Mikio Kano},
  title     = {Factors and Factorizations of Graphs:
               Proof Techniques in Factor Theory},
  series    = {Lecture Notes in Mathematics},
  volume    = {2031},
  publisher = {Springer},
  year      = {2011},
  doi       = {10.1007/978-3-642-21919-1}
}

@misc{DahanGroheNeuenNovotny2026,
  author = {Dahan, Anatole and Grohe, Martin and Neuen, Daniel and Novotn{\'y}, Tom{\'a}{\v{s}}},
  title = {Robust Graph Isomorphism, Quadratic Assignment and {VC} Dimension},
  year = {2026},
  eprint = {2604.12584},
  note = {arXiv:2604.12584},
  archivePrefix = {arXiv},
  url = {https://arxiv.org/abs/2604.12584}
}

@inproceedings{Morris2019,
  author = {Christopher Morris and Martin Ritzert and Matthias Fey
            and William L. Hamilton and Jan Eric Lenssen
            and Gaurav Rattan and Martin Grohe},
  title = {Weisfeiler and {L}eman Go Neural: Higher-Order Graph Neural Networks},
  booktitle = {Proceedings of the AAAI Conference on Artificial Intelligence},
  volume = {33},
  pages = {4602--4609},
  year = {2019},
  doi = {10.1609/aaai.v33i01.33014602}
}

@inproceedings{Xu2019,
  author = {Keyulu Xu and Weihua Hu and Jure Leskovec and Stefanie Jegelka},
  title = {How Powerful are Graph Neural Networks?},
  booktitle = {International Conference on Learning Representations},
  year = {2019},
  url = {https://openreview.net/forum?id=ryGs6iA5Km}
}
